\xpretocmd{\proof}{\setlength{\parindent}{0pt}}{}{}
\definecolor{uniRed}{rgb}{1,0.07,0.04}
\definecolor{uniBlue}{rgb}{0,0.67,0.86}
\definecolor{lGrey}{RGB}{220,220,220}
\theoremstyle{definition}
\newtheorem{defi}{Definition}
\theoremstyle{plain}
\newtheorem{thm}[defi]{Theorem}
\newtheorem*{thm*}{Theorem}
\newtheorem{lem}[defi]{Lemma}
\newtheorem*{lem*}{Lemma}
\newtheorem{prop}[defi]{Proposition}
\theoremstyle{remark}
\newtheorem{rem}[defi]{Remark}
\crefname{lem}{lemma}{lemmas}
\crefname{thm}{theorem}{theorems}
\DeclareMathOperator{\cov}{Cov}
\DeclareMathOperator{\dom}{dom }
\DeclareMathOperator{\oL}{L}
\DeclareMathOperator{\MalD}{D}
\DeclareMathOperator{\dist}{dist}
\DeclareMathOperator{\opHess}{Hess}
\newcommand{\opL}{\oL^{-1}}
\newcommand{\R}{\mathbb{R}}
\newcommand{\N}{\mathbb{N}}
\newcommand{\p}{\mathbb{P}}
\newcommand{\E}{\mathbb{E}}
\newcommand{\X}{\mathbb{X}}
\newcommand{\1}{\mathbbm{1}}
\newcommand{\lat}{L^{(\alpha)}_t}
\newcommand{\lati}[1]{L^{(\alpha_{#1})}_t}
\newcommand{\tillati}[1]{\tilde{L}^{(\alpha_{#1})}_t}
\newcommand{\DD}{\MalD^{(2)}}
\newcommand{\mal}{\MalD}
\newcommand{\kder}[2]{\norm{#1^{(#2)}}_\infty}
\renewcommand{\[}{\begin{equation}}
\renewcommand{\]}{\end{equation}}
\newcommand{\ind}[1]{\mathbbm{1}_{\{#1\}}}
\newcommand{\norm}[1]{\lVert #1 \rVert}
\numberwithin{equation}{section}
\title{Multivariate Second-Order $p$-Poincaré Inequalities}
\author{Tara Trauthwein\thanks{Department of Mathematics, University of Luxembourg and Department of Statistics, University of Oxford, tara.trauthwein@stats.ox.ac.uk. The author was supported by the Luxembourg National Research Fund (PRIDE17/1224660/GPS) and by the UK Engineering and Physical Sciences Research Council (EPSRC) grant (EP/T018445/1).}}
\date{}
\begin{document}
	\maketitle

\begin{abstract}
	In this work, we discuss new bounds for the normal approximation of multivariate Poisson functionals under minimal moment assumptions. Such bounds require one to estimate moments of so-called add-one costs of the functional. Previous works required the estimation of $4^{\text{th}}$ moments, while our result only requires $(2+\epsilon)$-moments, based on recent improvements introduced by (Trauthwein 2022). As applications, we show quantitative CLTs for two multivariate functionals of the Gilbert, or random geometric, graph. These examples were out of range for previous methods.
\end{abstract}

\noindent\textbf{Keywords:} Central Limit Theorem; Gilbert Graph; Malliavin Calculus; Multivariate Central Limit Theorem; Poincaré Inequality; Poisson Process; Stein's Method; Stochastic Geometry.\\

\noindent\textbf{Mathematics Subject Classification (2020)}: 60F05, 60H07, 60G55, 60D05

\section{Introduction}
The present paper establishes new distance bounds for multivariate Poisson functionals, allowing to derive quantitative Central Limit Theorems under minimal moment assumptions. It thus provides a multivariate counterpart to the improved univariate second-order Poincaré inequalities recently introduced in \cite{TT23}. As such, the paper extends some of the results from \cite{PZ10} and \cite{SY19}, providing comparable probabilistic inequalities but substantially reducing the moment conditions. The method used to achieve these minimal assumptions is to combine, as was done in \cite{PZ10}, Stein's and interpolation methods with Malliavin Calculus, and to make use of moment inequalities recently proposed in \cite{TT23}. Applications include, but are not limited to, the study of random geometric objects such as spatial random graphs.

The bounds in \Cref{thmMulti} are given in terms of the so-called \textbf{add-one cost} operator. Given a Poisson measure $\eta$ of intensity measure $\lambda$ on the $\sigma$-finite space $(\X,\lambda)$, let $F$ be a measurable, real-valued function of $\eta$. For any $x\in \X$, we define the add-one cost operator evaluated at $x$ by
\begin{equation}\label{eq:DefMalD}
\MalD_xF = \MalD_xF(\eta) := F(\eta + \delta_x) - F(\eta),
\end{equation}
where $\delta_x$ is the Dirac measure at $x$. This operator describes the change in the functional $F$ when a point $x$ is added to the measure $\eta$. Under additional assumptions on $F$, the operator $\MalD$ corresponds to the Malliavin derivative of $F$ at $x$.  The definition can be iterated to give the second derivative
\[
\DD_{x,y}F := \MalD_y(\MalD_xF).
\]
Our main result \Cref{thmMulti} allows one to derive quantitative CLTs for vector-valued $F=(F_1,\ldots,F_m)$ by controlling the covariance matrix $\cov(F)$ and $(2+\epsilon)$-moments of the terms $\mal_xF_i$ and $\DD_{x,y}F_i$ for $i=1,\ldots,m$. Previous multivariate bounds as in \cite[Thm.~1.1]{SY19} typically asked one to uniformly bound moments of the order $4+\epsilon$.

This type of bound relying on the add-one cost operators is particularly useful for quantities exhibiting a type of `local dependence', more generally known as \textbf{stabilization}. See e.g. \cite{LPS14,LRSY19,SY21} for further details on this topic. On a heuristic level, the first add-one cost $\mal_x F$ quantifies the amount of local change induced when adding the point $x$, while the second add-one cost $\DD_{x,y}F$ controls the dependence of points $x$ and $y$ which are further apart.

\Cref{thmMulti} provides bounds for distances of the type
\[
d(F,N) = \sup_{h \in \mathcal{H}} |\E h(F) - \E h(N)|
\]
where $F$ is a multivariate Poisson functional and $N$ a multivariate Gaussian with covariance matrix $C$. The distances we treat are the $d_2$ and the $d_3$ distances, where the test functions $h$ are chosen to be $\mathcal{C}^2$, resp. $\mathcal{C}^3$, with boundedness conditions (see \eqref{defd2} and \eqref{defd3} for precise definitions). The bound of the $d_2$ distance uses the multivariate \textbf{Stein method}, which comes at the detriment of needing the matrix $C$ to be positive-definite. The bound for the $d_3$ distance uses an \textbf{interpolation method} to circumvent this problem, but it comes at the cost of needing a higher degree of regularity in the test functions. In \cite{SY19}, the authors also provide a bound in the convex distance, where test functions are indicator functions of convex sets. Such a bound under minimal moment assumptions is out of reach for now, as it utilizes an involved recursive estimate which introduces new terms needing to be bounded by moments of the add-one costs. These terms cannot be treated with the currently known methods of reducing moment conditions.

The passage from the bounds achieved by Stein's method (resp. the interpolation method) to a bound involving add-one cost operators is achieved using \textbf{Malliavin Calculus}. The first combination of the two methods dates back to \cite{PN09,PNR2009} in a Gaussian context, and to \cite{PeccTaqq} in a Poisson context, and has since seen countless applications. The first appearance of a bound relying solely on moments of add-one costs was in the seminal paper \cite{LPS14}.

We study two applications in this paper in \Cref{thmGilMulti1,thmGilMulti2}, both relating to the \textbf{random geometric graph}, or \textbf{Gilbert graph}, the first study of which dates back to \cite{Gil61}. In this model, two points are connected if their distance is less than some threshold parameter. We study functionals of the type
\[
F = \sum_{\text{edges } e \text{ in } G_W} |e|^\alpha,
\]
where $G_W$ is the Gilbert graph restricted to vertices lying in some convex body $W$ and $|e|$ denotes the length of the edge $e$. We look at two types of vectors, one where the exponent $\alpha$ varies over different components, with fixed set $W$, and the other where the set $W$ varies for different components and $\alpha$ is fixed. In both settings we derive quantitative CLTs when we let the intensity of the underlying Poisson process grow to infinity. The setting with varying exponents has also been studied in \cite{TSRGilbert}, where a qualitative CLT has been derived in \cite[Thm.~5.2]{TSRGilbert} and the limit of the covariance matrix was given in \cite[Thm.~3.3]{TSRGilbert}. We provide a quantiative analogue to their result. In the setting with varying underlying sets, even the qualitative result is new.\\

\textit{Plan of the paper.} We present our main results in Section~\ref{secMain} and its applications in Section~\ref{secGil}. The proof of \Cref{thmMulti} is given in Section~\ref{secPMulti} and the ones for \Cref{thmGilMulti1,thmGilMulti2} in Section~\ref{secPGilMulti}. We introduce the necessary Malliavin operators and results about Malliavin Calculus and Poisson measures in Appendix~\ref{secApp}.

\textit{Acknowledgment.} I would like to thank Giovanni Peccati and Gesine Reinert for their support and helpful comments on this project.

\section{Main Result}\label{secMain}
We now present our main results. For a function $\phi:\R^m \rightarrow \R$ which is $k$ times continuously differentiable, denote by $\kder{\phi}{k}$ (resp. $\norm{\phi'}_\infty$ if $k=1$) the supremum of the absolute values of all $k^{\text{th}}$ partial derivatives, i.e.
\[
\kder{\phi}{k} := \max_{1 \leq i_1,\ldots,i_k \leq m} \sup_{x \in \R^m} \left|\frac{\partial^k \phi}{\partial x_{i_1} \ldots \partial x_{i_k}}(x)\right|.
\]
For a vector $x \in \R^m$, we denote by $\norm{x}$ the Euclidean norm of $x$.

 We can now introduce the distances to be used in this context. Define the following two sets of functions:
\begin{itemize}
	\item Let $\mathcal{H}^{(2)}_m$ be the set of all $\mathcal{C}^2$ functions $h:\R^m \rightarrow \R$ such that $|h(x)-h(y)| \leq \norm{x-y}$ for all $x,y\in \R^m$ and $\sup_{x\in \R^m} \norm{\opHess h(x)}_{op} \leq 1$;
	\item let $\mathcal{H}^{(3)}_m$ be the set of all $\mathcal{C}^3$ functions $h:\R^m\rightarrow\R$ such that $\kder{h}{2}$ and $\kder{h}{3}$ are bounded by $1$.
\end{itemize}
Here $\opHess h(x)$ denotes the Hessian matrix of $h$ at the point $x$ and $\norm{.}_\infty$ denotes the operator norm.

Let $X,Y$ be two $m$-dimensional random vectors. Define the $d_2$ and $d_3$ distances between $X$ and $Y$ as follows:
\begin{align}
	&d_2(X,Y) := \sup_{h \in \mathcal{H}^{(2)}_m} \left|\E h(X) - \E h(Y)\right|, \quad \text{if } \E \norm{X},\E\norm{Y} < \infty \label{defd2}\\
	&d_3(X,Y) := \sup_{h \in \mathcal{H}^{(3)}_m} \left|\E h(X) - \E h(Y)\right|, \quad \text{if } \E \norm{X}^2,\E\norm{Y}^2 < \infty. \label{defd3}
\end{align}

Recall the definition of the add-one cost from \eqref{eq:DefMalD}. We say that a measurable functional $F$ of $\eta$ is in $L^2(\p_\eta)$ if $\E F^2 <\infty$ and that it is in $\dom \mal$ if in addition
\[
\int_\X \E (\mal_x F)^2 \lambda(dx) < \infty.
\]

With these definitions, we can state the main result of this paper.

Formally define the terms $\zeta_1^{(p)},...,\zeta_4^{(q)}$ by

\begin{align*}
	&\zeta_1^{(p)} := \sum_{i,j=1}^m \left|C_{ij} - \cov(F_i,F_j)\right| \\
	&\zeta_2^{(p)} := 2^{2/p-1} \sum_{i,j=1}^m \left(\int_\X \left(\int_\X \E\left[|\DD_{x,y} F_i|^{2p}\right]^{1/2p}\E\left[|\DD_{x,y} F_j|^{2p}\right]^{1/2p}\lambda(dx)\right)^p\lambda(dy)\right)^{1/p} \\
	&\zeta_3^{(p)} := 2^{2/p} \sum_{i,j=1}^m \left(\int_\X \left(\int_\X \E\left[|\mal_{x} F_i|^{2p}\right]^{1/2p}\E\left[|\DD_{x,y} F_j|^{2p}\right]^{1/2p}\lambda(dx)\right)^p\lambda(dy)\right)^{1/p} \\
	&\zeta_4^{(q)} := m^{q-1} \sum_{i,j=1}^m \int_\X \E \left[|\mal_x F_i|^{q+1}\right]^{1/(q+1)} \E \left[|\mal_x F_j|^{q+1}\right]^{1-1/(q+1)} \lambda(dx).
\end{align*}

Then the following statement holds.

\begin{thm}\label{thmMulti}
	Let $(\X,\lambda)$ be a $\sigma$-finite measure space and $\eta$ be a $(\X,\lambda)$-Poisson measure. Let $m \geq 1$ and let $F=(F_1,...,F_m)$ be an $\R^m$-valued random vector such that for $1 \leq i \leq m$, we have $F_i \in L^2(\p_\eta) \cap \dom \mal$ and $\E F_i=0$. Let $C=(C_{ij})_{1 \leq i,j \leq m}$ be a symmetric positive-semidefinite matrix and let $X \sim \mathcal{N}(0,C)$. Then for all $p,q \in [1,2]$,
	\[
	d_3(F,X) \leq \tfrac{1}{2}(\zeta_1^{(p)} + \zeta_2^{(p)} + \zeta_3^{(p)}) + \zeta_4^{(q)}.
	\]
	If moreover the matrix $C$ is positive-definite, then for all $p,q \in [1,2]$,
	\[
	d_2(F,X) \leq \norm{C^{-1}}_{op} \norm{C}_{op}^{1/2} (\zeta_1^{(p)}+\zeta_2^{(p)}+\zeta_3^{(p)}) + \left(2\norm{C^{-1}}_{op} \norm{C}_{op}^{1/2} \vee \tfrac{\sqrt{2\pi}}{8} \norm{C^{-1}}_{op}^{3/2} \norm{C}_{op}\right) \zeta_4^{(q)}.
	\]
\end{thm}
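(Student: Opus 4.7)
The proof strategy is to combine the multivariate Malliavin-Stein framework of \cite{PZ10,SY19} with the improved univariate $p$-Poincaré moment bounds of \cite{TT23}. I would treat the $d_2$ and $d_3$ bounds in parallel, since both ultimately reduce to controlling the same family of Stein-type discrepancies, differing only in the regularity of the test function used.

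For the $d_3$ bound, the plan is to use the Gaussian interpolation $F_t := \sqrt{t}\,F + \sqrt{1-t}\,X$ with $X\sim\mathcal{N}(0,C)$ independent of $F$, and to write $\E h(F)-\E h(X) = \int_0^1 \tfrac{d}{dt}\E h(F_t)\,dt$. Gaussian integration by parts in $X$ produces a term $-\tfrac12\sum_{i,j}C_{ij}\E[\partial_{ij}h(F_t)]$, while the $F$-contribution is treated via the Poisson Malliavin integration-by-parts identity $\E[F_i g(F)] = \E\langle \MalD g(F), -\MalD\opL F_i\rangle_{L^2(\lambda)}$ applied to $g(F)=\partial_i h(F_t)$. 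Taylor-expanding the add-one cost yields
\[
\MalD_x \partial_i h(F_t) = \sqrt{t}\sum_j \partial_{ij}h(F_t)\,\MalD_x F_j + R_{i,x}(t),
\]
with $|R_{i,x}(t)| = O\bigl(t\,\kder{h}{3}|\MalD_x F|^2\bigr)$. Assembling these pieces shows that $\E h(F)-\E h(X)$ reduces to a main term of the form $\tfrac{1}{2}\sum_{i,j}\int_0^1\E[\partial_{ij}h(F_t)(\langle \MalD F_j,-\MalD\opL F_i\rangle - C_{ij})]\,dt$ plus a Taylor remainder. For the $d_2$ bound I would instead solve the multivariate Stein equation $\langle \opHess f_h(x), C\rangle - \langle x,\nabla f_h(x)\rangle = h(x)-\E h(X)$, whose solution $f_h$ enjoys the classical regularity $\kder{f_h}{2}\leq \norm{C^{-1}}_{op}\norm{C}_{op}^{1/2}$ and $\kder{f_h}{3}\leq \tfrac{\sqrt{2\pi}}{8}\norm{C^{-1}}_{op}^{3/2}\norm{C}_{op}$, explaining the prefactors in the $d_2$ bound. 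The same Malliavin manipulation then applies with $h$ replaced by $f_h$.

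The four $\zeta$-terms emerge naturally from this Stein-type discrepancy. The identity $\E\langle \MalD F_j,-\MalD\opL F_i\rangle = \cov(F_i,F_j)$ splits the main contribution into a deterministic mean-part that gives $\zeta_1^{(p)}$ and a centered fluctuation. The fluctuation is controlled via the Poisson Poincaré inequality applied to the random inner product $\langle \MalD F_j,-\MalD\opL F_i\rangle$; the product rule $\MalD_z(AB) = A\MalD_z B + B\MalD_z A + (\MalD_z A)(\MalD_z B)$, combined with the Mehler-based contractivity $\E[|\MalD\opL F|^{2p}]\leq \E[|\MalD F|^{2p}]$, reduces the variance to bilinear integrals in $|\DD F_i|\,|\DD F_j|$ and $|\MalD F_i|\,|\DD F_j|$. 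Applying Hölder at exponent $2p$ then yields exactly $\zeta_2^{(p)}$ and $\zeta_3^{(p)}$. Finally, the Taylor remainder, after integrating against $\lambda$ via the Mecke-Slivnyak formula and using Hölder at exponent $q+1$ across the $m$ coordinates, produces $\zeta_4^{(q)}$; the constant $m^{q-1}$ is the combinatorial cost of $m$-dimensional Hölder.

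The main obstacle will be the systematic replacement of $L^2$-Cauchy-Schwarz bounds by the $p$-moment inequalities from \cite{TT23}. The classical estimates of \cite{PZ10,SY19} rely on $\var(\langle \MalD F_j,-\MalD\opL F_i\rangle)\lesssim \iint \E[(\DD_{x,y}F_j)^2(\MalD_x F_i)^2]\,\lambda(dx)\lambda(dy)$ and similar bounds, which force uniform $(4+\epsilon)$-moments on the add-one costs. The Rosenthal-type refinement of \cite{TT23} permits decoupling the contributions of $\DD F_i$, $\DD F_j$, and $\MalD F$ via separated $2p$-moments with $p\in[1,2]$, and is the origin of the $1/(2p)$ and $1/(q+1)$ Hölder exponents in the $\zeta$-terms. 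Carefully tracking the universal constants $2^{2/p-1}$, $2^{2/p}$, and $m^{q-1}$ through Minkowski and Hölder inequalities will then deliver the stated bounds.
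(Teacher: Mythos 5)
Your proposal matches the paper's argument in all essentials: interpolation for $d_3$, the multivariate Stein equation for $d_2$, the Taylor/add‑one‑cost remainder (Lemma~\ref{lem3.1}), Malliavin integration by parts, the product formula, contractivity of $\MalD\opL$, Minkowski's integral inequality, and the $p$‑Poincar\'e inequality of \cite{TT23} to produce $\zeta_1,\ldots,\zeta_4$ under reduced moment assumptions. Two small slips: the bound on the third derivative of the Stein solution is $\kder{f_h}{3}\le \tfrac{\sqrt{2\pi}}{4}\norm{C^{-1}}_{op}^{3/2}\norm{C}_{op}$ (the $\sqrt{2\pi}/8$ in the theorem appears only after combining with the extra factor $\tfrac12$ from the remainder bound), and the Mecke formula is not actually invoked in deriving $\zeta_4^{(q)}$ --- that term comes from $(\sum_j|\MalD_xF_j|)^q\le m^{q-1}\sum_j|\MalD_xF_j|^q$ followed by H\"older at exponent $q+1$ and Lemma~\ref{lem:opL}.
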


The proof of these bounds is located in \Cref{secPMulti}. It relies on the work \cite{PZ10}, where the bound on the $d_2$ distance uses the Malliavin-Stein method, whereas the one for the $d_3$ distance uses an interpolation technique taken from the context of spin glasses (see \cite{Tal03}). The advantage of using an interpolation technique is that the limiting covariance matrix $C$ does not necessarily need to be positive-definite, but it comes at the cost of a higher regularity requirement for the test functions $h$. Building on the results of \cite{PZ10}, we adapt the univariate improvement of \cite{BOPT} to a multivariate setting and make use of the $p$-Poincaré inequality introduced in \cite[(4.7) in Remark~4.4]{TT23} (see also \Cref{prop:ppoin} in the Appendix).

\section{Applications}\label{secGil}

In this section, we study two multivariate functionals of the Gilbert graph. Both of these functionals consist of sums of power-weighted edge-lengths --- in the first functional, we vary the powers of the edges-lengths, and in the second functional, we restrict the graph to different domains.

We start by setting the general framework. Let $t\geq 1$ and $(\epsilon_t)_{t>0}$ a sequence of positive real numbers such that $\epsilon_t \rightarrow 0$ as $t \rightarrow \infty$. Define $\eta^t$ to be a Poisson measure on $\R^d$ of intensity $t\, dx$. For any convex body (i.e. a convex compact set with non-empty interior) $W \subset \R^d$ and any exponent $\alpha \in \R$, define
\begin{equation}\label{eq:DefLat}
\lat(W) := \frac{1}{2}\sum_{x,y \in \eta^t_{|W}} \ind{0 < \norm{x-y} < \epsilon_t} \norm{x-y}^{\alpha},
\end{equation}
where $\eta^t_{|W}$ denotes the restriction of $\eta$ to the set $W$. The quantity $\lat(W)$ is the sum of all edge-lengths to the power $\alpha$ in the Gilbert graph with parameter $\epsilon_t>0$ and points in $W$.

\begin{figure}[h]
	\begin{subfigure}[t]{.45\textwidth}
		\includegraphics[width=\textwidth]{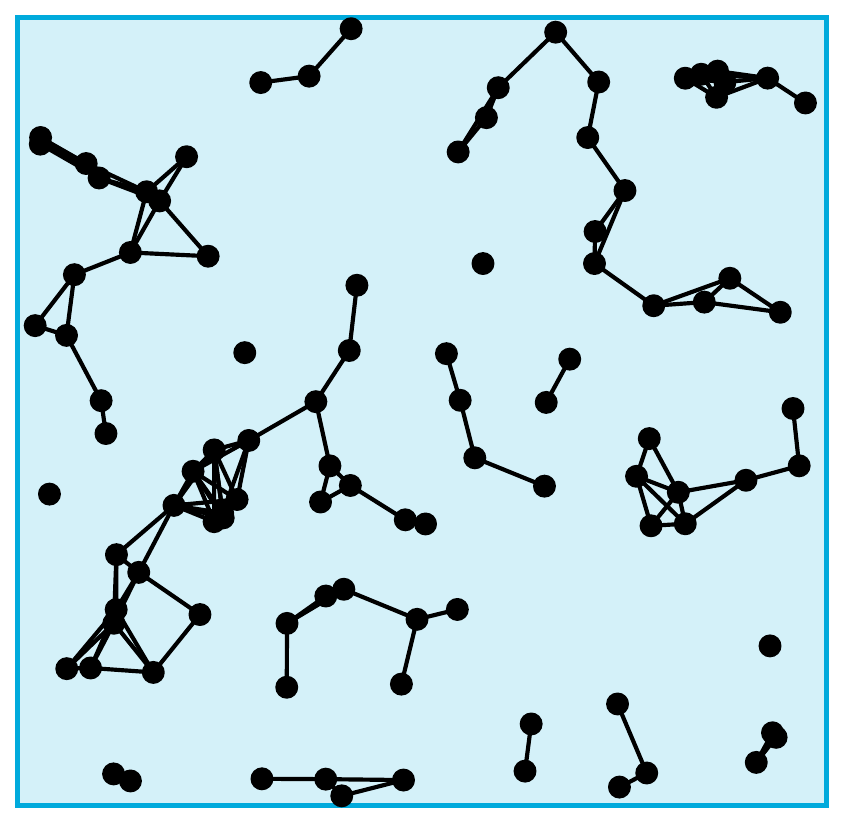}
		\caption{In the first application, we fix one Gilbert graph and look at the vector whose components are the sums of power-weighted edge-lengths, for different powers.}
	\end{subfigure}%
	\hspace{.05\textwidth}%
	\begin{subfigure}[t]{.45\textwidth}
		\includegraphics[width=\textwidth]{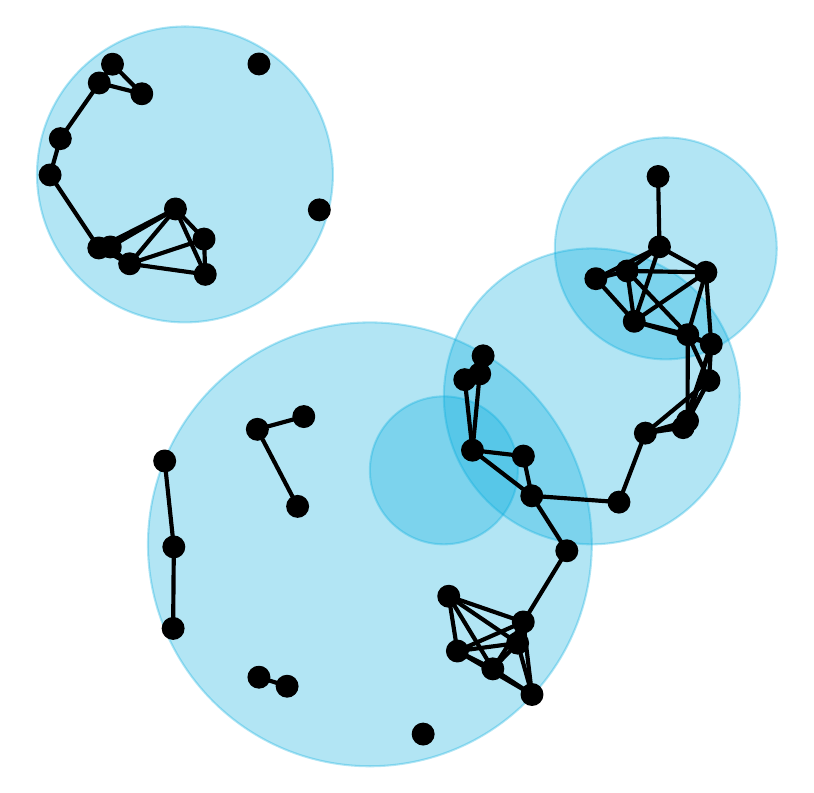}
		\caption{In the second application, we construct the Gilbert graph in multiple domains and study the vector whose components are the sums of power-weighted edge-lengths in the different domains.}
	\end{subfigure}
\end{figure}

\subsection{Varying the Exponents}

 We fix $W \subset \R^d$ to be a convex body and consider real numbers $\alpha_1,...,\alpha_m$ such that $\alpha_i+\alpha_j>-d$ for all $i,j \in \{1,...,m\}$. For every $1 \leq i \leq m$, define $\lati{i}(W)=\lati{i}$ as in \eqref{eq:DefLat} and set
\[
\tillati{i} := \left(t\epsilon_t^{\alpha_i + d/2} \vee t^{3/2}\epsilon_t^{\alpha_i+d}\right)^{-1} \left(\lati{i}-\E \lati{i}\right).
\]

Set furthermore
\[
\sigma_{ij}^{(1)} := \frac{d\kappa_d}{2|\alpha_i+\alpha_j+d|} \quad \text{and} \quad \sigma_{ij}^{(2)} := \frac{d^2\kappa_d^2}{(\alpha_i+d)(\alpha_j+d)},
\]
where $\kappa_d$ denotes the volume of a unit ball in $\R^d$, and define the matrix $C=(C_{ij})_{1 \leq i,j \leq m}$ by $C_{ij} = |W|c_{ij}$, where
\[
c_{ij} := 
\begin{cases*}
	\sigma_{ij}^{(1)} & if $\displaystyle\lim_{t\rightarrow\infty} t\epsilon_t^d = 0$ \\
	\left(\sigma_{ij}^{(1)} + \theta \sigma_{ij}^{(2)} \right) & if $\displaystyle\lim_{t\rightarrow\infty} t\epsilon_t^d = \theta \leq 1$ \\
	\left(\tfrac{1}{\theta}\sigma_{ij}^{(1)} + \sigma_{ij}^{(2)} \right) & if $\displaystyle\lim_{t\rightarrow\infty} t\epsilon_t^d = \theta > 1$ \\
	\sigma_{ij}^{(2)} & if $\displaystyle\lim_{t\rightarrow\infty} t\epsilon_t^d = \infty$, \\
\end{cases*}
\]
and $|W|$ denotes the volume of $W$. Defining the vector $\tilde{L}_t$ as 
\[
\tilde{L}_t := \left(\tillati{1},...,\tillati{m}\right),
\]
we recall that a CLT for $\tilde{L}_t$ has been shown in \cite[Thm.~5.2]{TSRGilbert}, as well as convergence of the covariance matrix of $\tilde{L}_t$ to $C$ in \cite[Thm.~3.3]{TSRGilbert}. The matrix $C$ is positive-definite in the sparse and thermodynamic regime (i.e. if $t\epsilon_t^d \rightarrow 0$ or $t\epsilon_t^d \rightarrow c>0$), while it is singular in the dense regime (i.e. if $t\epsilon_t^d \rightarrow \infty$), see \cite[Prop.~3.4]{TSRGilbert}.
Define also
\begin{equation}\label{eq:betadef}
\beta_{ij}^{(t)} := \frac{\sigma_{ij}^{(1)} + \sigma_{ij}^{(2)}t\epsilon_t^d}{1 \vee t\epsilon_t^d}
\end{equation}
and note that $\beta_{ij}^{(t)} \rightarrow c_{ij}$ as $t \rightarrow \infty$.
\begin{thm}\label{thmGilMulti1}
	Assume that $t^2\epsilon_t^d \rightarrow \infty$ as $t \rightarrow \infty$. Let $X\sim\mathcal{N}(0,C)$ be a centred Gaussian with covariance matrix $C$. Then
	\begin{itemize}
		\item if $\alpha_1,...,\alpha_m>-\frac{d}{4}$, there is a constant $c_1>0$ such that for all $t\geq 1$ large enough
		\begin{equation}\label{eqMultiBound1}
			d_3(\tilde{L}_t, X) \leq c_1 \left(\epsilon_t + \max_{1 \leq i,j \leq m}\left|\beta_{ij}^{(t)}-c_{ij}\right| + \left(t^{-1/2} \vee (t^2\epsilon_t^d)^{-1/2}\right)\right).
		\end{equation}
		\item if $-\frac{d}{2}<\min\{\alpha_1,...,\alpha_m\} \leq -\frac{d}{4}$, then for any $1<p<-\frac{d}{2} \min\{\alpha_1,...,\alpha_m\}^{-1}$, there is a constant $c_2>0$ such that for all $t\geq 1$ large enough
		\begin{equation}\label{eqMultiBound2}
			d_3(\tilde{L}_t, X) \leq c_2 \left(\epsilon_t + \max_{1 \leq i,j \leq m} \left|\beta_{ij}^{(t)}-c_{ij}\right| + \left(t^{-1+1/p} \vee (t^2\epsilon_t^d)^{-1+1/p}\right)\right).
		\end{equation}
		If $\lim_{t \rightarrow \infty} t\epsilon_t^d < \infty$, then the bounds \eqref{eqMultiBound1} and \eqref{eqMultiBound2} apply to $d_2(\tilde{L}_t,X)$ as  well for different constants $c_1,c_2>0$.
	\end{itemize}
\end{thm}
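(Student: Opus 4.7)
The plan is to apply our main result \Cref{thmMulti} to the centred vector $\tilde{L}_t$ with covariance matrix $C$, then estimate each of the quantities $\zeta_1^{(p)},\zeta_2^{(p)},\zeta_3^{(p)},\zeta_4^{(q)}$. With the normalising factor $s_i := t\epsilon_t^{\alpha_i+d/2}\vee t^{3/2}\epsilon_t^{\alpha_i+d}$, the add-one costs read
\[
\mal_x\tillati{i} = s_i^{-1}\ind{x\in W}\sum_{z\in\eta^t\cap W}\ind{0<\|x-z\|<\epsilon_t}\|x-z\|^{\alpha_i},
\]
while $\DD_{x,y}\tillati{i} = s_i^{-1}\ind{x,y\in W}\ind{0<\|x-y\|<\epsilon_t}\|x-y\|^{\alpha_i}$ is deterministic (independent of $\eta^t$).

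For $\zeta_2^{(p)}$, the deterministic form of the second add-one cost makes the computation direct: the inner integral reduces to $t\int_W\ind{0<\|x-y\|<\epsilon_t}\|x-y\|^{\alpha_i+\alpha_j}dx\lesssim t\epsilon_t^{\alpha_i+\alpha_j+d}$ under the hypothesis $\alpha_i+\alpha_j>-d$, and integrability of the $L^{2p}$-norm needs $2p\alpha_i>-d$. The latter is automatic when $\alpha_i>-d/4$ (first case, $p=2$) but forces $p<-d/(2\min\alpha_i)$ in the second case. Integrating in $y$, extracting the $p$-th root and normalising by $(s_is_j)^{-1}$ delivers $\zeta_2^{(p)}\lesssim t^{-1+1/p}(1\vee t\epsilon_t^d)^{-1}\leq t^{-1+1/p}\vee(t^2\epsilon_t^d)^{-1+1/p}$. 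For $\zeta_3^{(p)}$ and $\zeta_4^{(q)}$ one additionally needs moment control on the first add-one cost: since $\mal_x\lati{i}$ is a non-negative Poisson integral, a Rosenthal-type inequality yields
\[
(\E[(\mal_x\lati{i})^{r}])^{1/r}\lesssim t\epsilon_t^{\alpha_i+d}+(t\epsilon_t^{2\alpha_i+d})^{1/2}+(t\epsilon_t^{r\alpha_i+d})^{1/r}
\]
whenever $r\alpha_i>-d$. Setting $r=2p$ for $\zeta_3^{(p)}$ and $r=q+1$ for $\zeta_4^{(q)}$, and selecting $q=2$ in the first case or $q$ just below $2p-1$ in the second, a regime-by-regime check shows that both quantities are also dominated by $t^{-1+1/p}\vee(t^2\epsilon_t^d)^{-1+1/p}$; in fact $\zeta_4^{(q)}$ is strictly smaller for $p>1$, so the dominating rate is driven by $\zeta_2^{(p)}$ and $\zeta_3^{(p)}$.

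The covariance term $\zeta_1^{(p)}$ is treated by a direct Mecke-formula computation: the two-point and three-point contributions to $\cov(\lati{i},\lati{j})$ yield respectively $s_is_j|W|\sigma_{ij}^{(1)}$ and $s_is_jt\epsilon_t^d|W|\sigma_{ij}^{(2)}$, modulo a boundary correction of order $s_is_j\epsilon_t|W|$ stemming from points within distance $\epsilon_t$ of $\partial W$. Normalising yields $|\cov(\tillati{i},\tillati{j})-C_{ij}|\lesssim|\beta_{ij}^{(t)}-c_{ij}|+\epsilon_t$, and assembling the four estimates into \Cref{thmMulti} produces \eqref{eqMultiBound1} and \eqref{eqMultiBound2}. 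The $d_2$ bound follows identically whenever $C$ is positive-definite, which by \cite[Prop.~3.4]{TSRGilbert} corresponds to $\lim_{t\to\infty}t\epsilon_t^d<\infty$. The delicate point of the argument is controlling $\zeta_4^{(q)}$ in the low-moment regime $-d/2<\min\alpha_i\leq-d/4$, where $q+1$ must be kept strictly below the integrability threshold $-d/\min\alpha_i$ of the first add-one cost, forcing a careful trade-off with $p$; propagating the boundary corrections uniformly across the four regimes of $t\epsilon_t^d$ that define $c_{ij}$ is a further source of book-keeping.
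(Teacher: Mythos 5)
Your proposal is correct and follows essentially the same architecture as the paper's proof: apply \Cref{thmMulti}, then bound $\zeta_1^{(p)},\ldots,\zeta_4^{(q)}$ term by term using the explicit forms of the add-one costs of $\lat(W)$, choosing $q$ so that $\zeta_4^{(q)}$ contributes no more than the $\zeta_2,\zeta_3$ rate (the paper uses the clean choice $q=3-2/p$, which stays in $[1,2]$ and always respects the moment threshold $(q+1)\alpha_i>-d$). The main stylistic difference is that you re-derive the two ingredients the paper imports ready-made --- the moment bound $\E[(\mal_x L^{(\alpha)}_t)^r]^{1/r}\lesssim\epsilon_t^\alpha(t\epsilon_t^d)^{1/r}(1\vee t\epsilon_t^d)^{1-1/r}$ via Rosenthal's inequality rather than citing \cite[Prop.~F.2]{TT23}, and the covariance asymptotics via a Mecke-formula calculation rather than citing \cite[Thm.~3.3]{TSRGilbert} --- but both yield the same bounds. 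One small misattribution worth fixing: since $\DD_{x,y}\lat$ is deterministic, the integrability condition $2p\alpha_i>-d$ plays no role in $\zeta_2^{(p)}$ (only $\alpha_i+\alpha_j>-d$ is needed there); it becomes relevant only for $\zeta_3^{(p)}$, and the analogous constraint $(q+1)\alpha_i>-d$ for $\zeta_4^{(q)}$.
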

The proof of this theorem can be found in \Cref{secPGilMulti}. The bounds given in \Cref{thmGilMulti1} vary according to the limit of $t\epsilon^d$. We give a precise discussion of the bounds \eqref{eqMultiBound2} in Table~\ref{table:Multi1}. The bounds for \eqref{eqMultiBound1} follow when setting $p=2$.

{\renewcommand{\arraystretch}{2.2}
	\begin{table}[h!]
		\noindent\begin{tabular}{|l|c|l|c|}
			\hline
			& $\displaystyle c_{ij}$ & speed of convergence in \eqref{eqMultiBound2} & bound holds for \\
			\hline
			$\displaystyle t\epsilon^d_t \rightarrow 0$ & $\displaystyle\sigma_{ij}^{(1)}$ & $\displaystyle\epsilon_t + \sigma^{(2)}_* t\epsilon_t^d + \left(t^2\epsilon_t^d\right)^{-1+1/p}$ & $d_2$ and $d_3$ distances \\
			\hline
			$\displaystyle t\epsilon^d_t \rightarrow \theta \leq 1$& $\displaystyle\sigma_{ij}^{(1)} + \theta \sigma_{ij}^{(2)}$ & $\displaystyle\epsilon_t + \sigma^{(2)}_* \left|\theta - t\epsilon_t^d\right| + t^{-1+1/p}$ & $d_2$ and $d_3$ distances \\
			\hline
			$\displaystyle t\epsilon^d_t \rightarrow \theta>1$ & $\displaystyle\tfrac{1}{\theta}\sigma_{ij}^{(1)} + \sigma_{ij}^{(2)}$ & $\displaystyle\epsilon_t + \sigma^{(1)}_* \left|\tfrac 1 \theta - \tfrac{1}{t\epsilon_t^d}\right| + t^{-1+1/p}$ & $d_2$ and $d_3$ distances \\
			\hline
			$\displaystyle t\epsilon^d_t \rightarrow \infty$ & $\displaystyle \sigma_{ij}^{(2)}$ & $\displaystyle \epsilon_t + \sigma^{(1)}_* \tfrac{1}{t\epsilon_t^d} + t^{-1+1/p}$ & $d_3$ distance \\
			\hline
		\end{tabular}
		\caption{The different bounds depending on the limit of $t\epsilon_t^d$. Here we define $\sigma^{(k)}_*:= \max_{1 \leq i,j \leq m} \sigma_{ij}^{(k)}$.\label{table:Multi1}}
\end{table}}

\subsection{Varying the Domains}
 Fix $\alpha>-\frac{d}{2}$ to be a real number and let $W_1,...,W_m \subset \R^d$ be convex bodies. For every $1 \leq i \leq m$, define $F^{(i)}_t := \lat(W_i)$ using the definition \eqref{eq:DefLat} and set
\[
\tilde{F}_t^{(i)} := t^{-1} \epsilon_t^{-\alpha-d/2} \left(\tfrac{1}{2} \sigma_1  + \sigma_2 t\epsilon_t^d\right)^{-1/2} \left(F^{(i)}_t - \E F^{(i)}_t\right)
\]
and
\[
\tilde{F}_t := \left(\tilde{F}_t^{(1)},...,\tilde{F}_t^{(m)}\right),
\]
where
\[
\sigma_1 := \frac{d\kappa_d}{d+2\alpha} \quad \text{and} \quad \sigma_2 := \left(\frac{d\kappa_d}{\alpha+d}\right)^2.
\]
Define the matrix $C=(C_{ij})_{1 \leq i,j\leq m}$ by 
\[
C_{ij} := |W_i \cap W_j|.
\]
\begin{thm}\label{thmGilMulti2}
	Under the above conditions, the matrix $C$ is the asymptotic covariance matrix of the $m$-dimensional random vector $\tilde{F}_t$.
	
	Moreover, assume that $t^2\epsilon_t^d \rightarrow \infty$ as $t \rightarrow \infty$. Let $X\sim\mathcal{N}(0,C)$ be a centred Gaussian with covariance matrix $C$. Then 
	\begin{itemize}
		\item if $\alpha>-\frac{d}{4}$, there is a constant $c_1>0$ such that for all $t \geq 1$ large enough,
		\begin{equation}\label{eqMultiBound3}
			d_3(\tilde{F}_t, X) \leq c_1 \left(\epsilon_t + \left(t^{-1/2} \vee (t^2\epsilon_t^d)^{-1/2}\right)\right).
		\end{equation}
		\item if $-\frac{d}{2}<\alpha \leq -\frac{d}{4}$, then for any $1<p<-\frac{d}{2\alpha}$, there is a constant $c_2>0$ such that for all $t \geq 1$ large enough,
		\begin{equation}\label{eqMultiBound4}
			d_3(\tilde{F}_t, X) \leq c_2 \left(\epsilon_t + \left(t^{-1+1/p} \vee (t^2\epsilon_t^d)^{-1+1/p}\right)\right).
		\end{equation}
		If the matrix $C$ is positive definite, then the bounds \eqref{eqMultiBound3} and \eqref{eqMultiBound4} apply to $d_2(\tilde{F}_t,X)$ as  well for different constants $c_1,c_2>0$.
	\end{itemize}
\end{thm}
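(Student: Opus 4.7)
The plan is to apply \Cref{thmMulti} to $\tilde F_t$ with suitable $p,q\in[1,2]$. The main ingredients are (i) identification of the add-one costs, (ii) moment bounds on them, and (iii) an expansion of the covariance matrix of $\tilde F_t$. A direct computation from the definition of $\lat(W_i)$ yields
\[
\mal_x F_t^{(i)} = \ind{x\in W_i}\sum_{y\in\eta^t_{|W_i}}\ind{0<\|x-y\|<\epsilon_t}\|x-y\|^\alpha,\qquad
\DD_{x,y} F_t^{(i)} = \ind{x,y\in W_i}\ind{0<\|x-y\|<\epsilon_t}\|x-y\|^\alpha,
\]
so that $\DD_{x,y} F_t^{(i)}$ is deterministic while $\mal_x F_t^{(i)}$ is a Poisson integral of order one. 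Both match the univariate add-one costs of $\lat(W_i)$ decorated by extra indicator functions, and linearity of $\mal,\DD$ carries them to $\tilde F_t^{(i)}$ up to the normalization $A_t := t^{-1}\epsilon_t^{-\alpha-d/2}(\tfrac{1}{2}\sigma_1+\sigma_2 t\epsilon_t^d)^{-1/2}$.

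The moment terms $\zeta_2^{(p)},\zeta_3^{(p)},\zeta_4^{(q)}$ are then essentially the univariate ones. Using the pointwise domination $|\mal_x F_t^{(i)}|\leq|\mal_x\lat(W_1\cup\cdots\cup W_m)|$ and similarly for $\DD$, together with Mecke and Jensen, every relevant integrand reduces to $\|x-y\|^{\gamma\alpha}\ind{\|x-y\|<\epsilon_t}$ for some $\gamma\in\{2,2p,q+1\}$. A polar-coordinate computation gives $\int_{B(0,\epsilon_t)}\|y\|^{\gamma\alpha}dy\asymp\epsilon_t^{\gamma\alpha+d}$, finite precisely when $\gamma\alpha+d>0$. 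This forces the range $p<-d/(2\alpha)$ when $\alpha\leq-d/4$, matching the hypothesis. After absorbing the powers of $A_t$ and the factors $t^2\epsilon_t^{2\alpha+d}+t^3\epsilon_t^{2\alpha+2d}$ coming from the Poisson integrations, one recovers the rates $t^{-1}\vee(t^2\epsilon_t^d)^{-1}$ for $p=q=2$ and $t^{-1+1/p}\vee(t^2\epsilon_t^d)^{-1+1/p}$ for the $p$-weakened case.

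For $\zeta_1^{(p)}$, I would use the Poisson chaos decomposition $F_t^{(i)} = \E F_t^{(i)} + I_1(g_1^{(i)}) + I_2(g_2^{(i)})$, where
\[
g_1^{(i)}(x) = t\ind{x\in W_i}\int_{W_i}\ind{\|x-y\|<\epsilon_t}\|x-y\|^\alpha dy,\qquad
g_2^{(i)}(x,y) = \tfrac{1}{2}\ind{x,y\in W_i}\ind{\|x-y\|<\epsilon_t}\|x-y\|^\alpha,
\]
so that $\cov(F_t^{(i)},F_t^{(j)}) = \langle g_1^{(i)},g_1^{(j)}\rangle_{L^2(t\,dx)} + 2\langle g_2^{(i)},g_2^{(j)}\rangle_{L^2((t\,dx)^{\otimes 2})}$. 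Computing both inner products in polar coordinates yields leading order $(\tfrac{1}{2}\sigma_1+\sigma_2 t\epsilon_t^d)\,t^2\epsilon_t^{2\alpha+d}|W_i\cap W_j|$, which scaled by $A_t^2$ gives the claimed limit $|W_i\cap W_j|=C_{ij}$ and shows $\zeta_1^{(p)}=O(\epsilon_t)$.

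The main obstacle is controlling the finite-size corrections in the covariance when $W_i\neq W_j$. The second-chaos integral over $(W_i\cap W_j)^2$ loses an $\epsilon_t$-tube near $\partial(W_i\cap W_j)$, and the first-chaos integral has inner integrals that deviate from their bulk value on $\epsilon_t$-neighbourhoods of $\partial W_i$ and $\partial W_j$. Both effects contribute at most $O(\epsilon_t)$ relative to the leading order, thanks to the bounded surface area of convex bodies. Assembling the four $\zeta$-bounds via \Cref{thmMulti} yields the $d_3$ estimates, and when $C$ is positive-definite the second part of \Cref{thmMulti} supplies the matching $d_2$ bound.
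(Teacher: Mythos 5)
Your proposal follows essentially the same strategy as the paper: identify $\mal_x F_t^{(i)}$ and $\DD_{x,y}F_t^{(i)}$, bound $\zeta_2^{(p)},\zeta_3^{(p)},\zeta_4^{(q)}$ via the moment estimates for $\mal_x\lat$ and the explicit form of $\DD_{x,y}\lat$, choose $q=3-2/p$, and control $\zeta_1^{(p)}$ by showing the normalized covariance equals $|W_i\cap W_j|$ up to an $O(\epsilon_t)$ boundary-layer correction. The one cosmetic difference is that you compute $\cov(F_t^{(i)},F_t^{(j)})$ via the chaos decomposition $F_t^{(i)}=\E F_t^{(i)}+I_1(g_1^{(i)})+I_2(g_2^{(i)})$ and the Wiener--It\^o isometry, while the paper applies the Mecke formula directly to the product of double Poisson integrals and isolates the diagonal contributions; for an order-2 $U$-statistic these yield term-by-term the same two integrals (your $\langle g_1^{(i)},g_1^{(j)}\rangle$ is the paper's triple integral and your $2\langle g_2^{(i)},g_2^{(j)}\rangle$ is the halved double integral), so the routes are equivalent. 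One small caution: the detour through the pointwise domination $|\mal_x F_t^{(i)}|\leq|\mal_x\lat(W_1\cup\cdots\cup W_m)|$ is unnecessary and slightly delicate, since $W_1\cup\cdots\cup W_m$ need not be convex and hence Lemma~\ref{lem:GilBounds} does not apply to it directly; it is cleaner to note that $F_t^{(i)}=\lat(W_i)$ and apply Lemma~\ref{lem:GilBounds} to each convex body $W_i$ separately, as the paper does. With that replacement your argument goes through.
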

The speed of convergence varies according to the asymptotic behaviour of $t\epsilon_t^d$. In particular, one has
\begin{equation}\label{eqGilCases}
	t^{-1+1/p} \vee (t^2\epsilon_t^d)^{-1+1/p} =
	\begin{cases*}
		(t^2\epsilon_t^d)^{-1+1/p} & if $t\epsilon_t^d \rightarrow 0$ (sparse regime) \\
		t^{-1+1/p} & if $t\epsilon_t^d \rightarrow \theta>0$ (thermodynamic regime) \\
		t^{-1+1/p} & if $t\epsilon_t^d \rightarrow \infty$ (dense regime). \\
	\end{cases*}
\end{equation}
The proof of \Cref{thmGilMulti2} can be found in \Cref{secPGilMulti}.

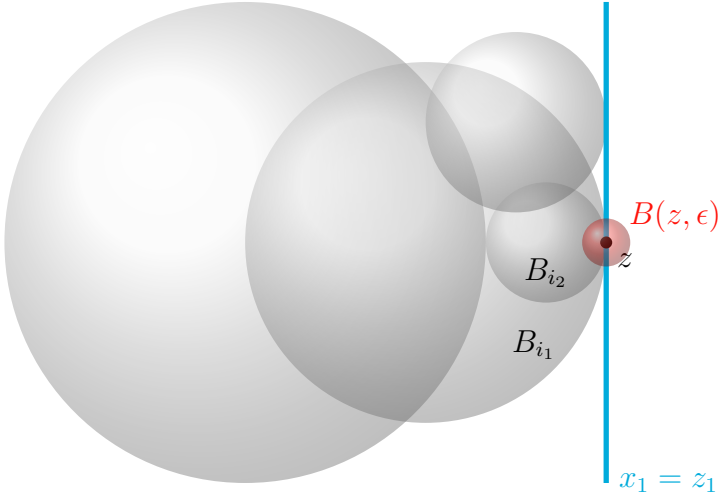
\begin{figure}[h]
	\begin{tikzpicture}[scale=.8]
		\shade[ball color = lGrey!40, opacity = 0.4] (0,0) circle (4);
		\shade[ball color = lGrey!40, opacity = 0.4] (3,0) circle (3);
		\shade[ball color = lGrey!40, opacity = 0.4] (5,0) circle (1);
		\shade[ball color = lGrey!40, opacity = 0.4] (4.5,2) circle (1.5);
		\node at (4.8,-1.7) {$B_{i_1}$};
		\node at (5,-.5) {$B_{i_2}$};
		\draw[line width=2pt, color=uniBlue] (6,-4) node[right] {$x_1=z_1$} -- ++(0,8);
		\fill[black] (6,0) circle (.1) node[anchor=north west] {$z$};
		\shade[ball color = uniRed, opacity = 0.4] (6,0) circle (.4);
		\node[anchor=south west, color=uniRed] at (6.2,0) {$B(z,\epsilon)$};
	\end{tikzpicture}
	\caption{Constructing a sequence from a collection of distinct balls}
	\label{fig:balls}
\end{figure}

\begin{rem}\label{remMulti2}
	The question whether $C$ is positive-definite is not entirely straightforward, but some things can be said. For a vector $x\in \R^m$, we have
	\[
	x^TCx = \sum_{i,j=1}^{m} x_ix_j \int_{\R^d} \1_{W_i}(z) \1_{W_j}(z) dz = \int_{\R^d} \left(\sum_{i=1}^{m} x_i \1_{W_i}(z) \right)^2 dz.
	\]
	Since $W_1,\ldots,W_m$ form a collection of convex bodies, it is clearly a necessary and sufficient condition that the family of indicators $\1_{W_1}(z),...,\1_{W_m}(z)$ is linearly independent in $L^2(\R^d)$, which translates (to some extent) to none of the sets being obtainable from the other sets via certain combinations of unions, intersections and complements, disregarding sets of measure zero.
	
	
	A simple sufficient condition for positive definiteness is that for each $1 \leq i \leq m-1$, we have
	\begin{equation}\label{eqConvCond}
		W_i \setminus \bigcup_{j>i} W_j \neq \emptyset,
	\end{equation}
	i.e. the sets $W_1,...,W_m$ can be ordered in a sequence such that each set has a point not included in subsequent sets. This implies that the family of indicators $\1_{W_1}(z),...,\1_{W_m}(z)$ is linearly independent, and it also entails positive-definiteness of $C$. Indeed, let $z_1 \in W_1 \setminus \bigcup_{j>1} W_j$. Since $\bigcup_{j>1} W_j$ is closed, there is an open set $U \subset\left(\bigcup_{j>1} W_j\right)^c$ such that $z_1 \in U$. As $W_1$ is a convex body, the intersection $V$ of the interior of $W_1$ with $U$ is open and non-empty. Assume that $x^TCx=0$, then the function $f=x_1\1_{W_1} + \ldots + x_m\1_{W_m}$ is zero almost everywhere. In particular, it is constant $f \equiv x_1$ on $V \subset W_1 \setminus \bigcup_{j>1} W_j$, hence we must have $x_1=0$. One can now iterate this argument to show that $x_2=\ldots=x_m=0$.
	
	\begin{figure}[h]
		\begin{tikzpicture}[scale=2]
			\draw[fill=uniRed] (0,0) rectangle +(.8,1) node[pos=.5] {$W_1$};
			\draw[dotted] (.8,1) -- ++(.2,0) -- ++(0,-1) -- ++(-.2,0);
			\draw[fill=uniBlue] (1.5,1) rectangle +(1,-.8) node[pos=.5] {$W_2$};
			\draw[dotted] (1.5,.2) -- ++(0,-.2) -- ++(1,0) -- ++(0,.2);
			\draw[fill=green] (3.2,0) rectangle +(.8,1) node[pos=.5] {$W_3$};
			\draw[dotted] (3.2,0) -- ++(-.2,0) -- ++(0,1) -- ++(.2,0);
			\draw[fill=yellow] (4.5,0) rectangle +(1,.8) node[pos=.5] {$W_4$};
			\draw[dotted] (4.5,.8) -- ++(0,.2) -- ++(1,0) -- ++(0,-.2);
			\draw[-{Latex[length=3mm,width=5mm]},line width=2pt] (5.8,.5) -- (6.5,.5);
			\draw[fill=uniRed,opacity=.3] (6.8,0) rectangle +(.8,1);
			\draw[fill=uniBlue,opacity=.3] (6.8,1) rectangle +(1,-.8);
			\draw[fill=green,opacity=.3] (7,0) rectangle +(.8,1);
			\draw[fill=yellow,opacity=.3] (6.8,0) rectangle +(1,.8);
		\end{tikzpicture}
		\caption{Four convex bodies which are being superimposed on the right. Every point is covered by at least two sets, but no set can be obtained by combining unions and intersections of the other sets.}
		\label{fig:fourBodies}
	\end{figure}
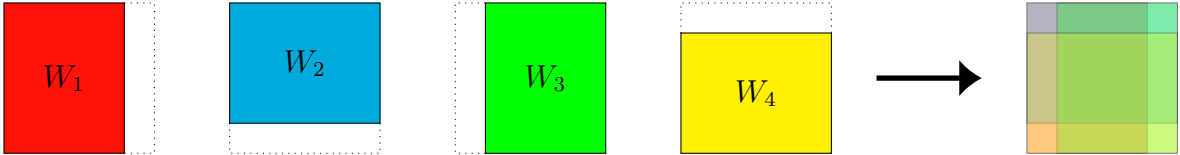
	
	If the collection $W_1,\ldots,W_m$ consists of distinct balls, we can prove positive-definiteness of $C$ using condition \eqref{eqConvCond}. Indeed, let $B_1,\ldots,B_m$ be a collection of closed distinct balls in $\R^d$. The union $\bigcup_{i\geq 1} B_i$ is closed and bounded, and admits thus at least one point $z$ whose first coordinate achieves the maximum over all first coordinates of points in $\bigcup_{i\geq 1} B_i$. Let $B_{i_1},\ldots,B_{i_n}$ be the balls tangent to the hyperplane $x_1=z_1$, sorted by decreasing radius (i.e. $B_{i_1}$ is the largest ball among $B_{i_1},\ldots,B_{i_n}$). There is an open ball $B(z,\epsilon) \subset \left(\bigcup_{j \neq i_1,\ldots,i_n}B_j\right)^c$, and the intersection $B(z,\epsilon) \cap B_{i_1}$ contains a point not included in $B_{i_2},\ldots,B_{i_n}$. The ball $B_{i_1}$ can thus be made the first ball in the sequence. This construction can be iterated to satisfy condition \eqref{eqConvCond}. See Figure~\ref{fig:balls} for an illustration.
	
	The condition \eqref{eqConvCond} is however not necessary for the matrix $C$ to be positive definite. Indeed, consider a superposition of the sets $W_1,\ldots,W_4$ depicted in Figure~\ref{fig:fourBodies}. One can show easily that if $x_1\1_{W_1} + \ldots + x_4\1_{W_4} =0$ almost everywhere, then $x_1=\ldots=x_4=0$, hence the indicators are linearly independent in $L^2(\R^d)$ and the resulting matrix is positive-definite. However, these sets do not fulfill condition \eqref{eqConvCond}.
\end{rem}

\section{Proof of \Cref{thmMulti}}\label{secPMulti}

The proof of \Cref{thmMulti} uses both an interpolation technique and the multivariate Stein method. For a positive-definite symmetric matrix $C=(C_{ij})_{1 \leq i,j \leq m}$ and a function $g:\R^m \rightarrow \R$, the multivariate Stein equation is given by
\begin{equation}\label{eqMulStein}
	g(x) - \E g(X) = \langle x , \nabla f(x) \rangle_{\R^m} - \langle C , \opHess f(x) \rangle_{H.S.},
\end{equation}
for $x \in \R^m$ and $X \sim \mathcal{N}(0,C)$. The inner product $\langle .,. \rangle_{H.S.}$ is the Hilbert-Schmidt inner product defined as $\langle A,B \rangle_{H.S.}:= Tr(AB^T)$ for real $m \times m$ matrices $A$ and $B$ and where $Tr(.)$ denotes the trace function. If $g\in \mathcal{C}^2(\R^m)$ has bounded first- and second-order partial derivatives, then a solution to \eqref{eqMulStein} is given by
\begin{equation}\label{eqcansol}
	f_g(x) := \int_0^1 \frac{1}{2t} \E\left[g(\sqrt{t}x + \sqrt{1-t}X) - g(X)\right]\, dt.
\end{equation}
The solution $f_g$ satisfies the following bounds:
\begin{align}
	&\kder{f_h}{2} \leq \norm{C^{-1}}_{op} \norm{C}_{op}^{1/2} \norm{g'}_\infty \label{eqH1} \\
	\intertext{and}
	&\kder{f_g}{3} \leq \frac{\sqrt{2\pi}}{4} \norm{C^{-1}}_{op}^{3/2} \norm{C}_{op} \sup_{x \in \R^m} \norm{\opHess g(x)}_{op}, \label{eqH2}
\end{align}
where $\opHess g(x)$ denotes the Hessian matrix of the function $g$ evaluated at $x$ and $\norm{.}_{op}$ denotes the operator norm.
These results can be found in \cite[Lemma~2.17]{PZ10}.

Before we can give the proof of \Cref{thmMulti}, we also need some technical estimates which improve the corresponding estimates given in \cite{PZ10}. The first lemma is an extension of Lemma~3.1 in \cite{PZ10}: contrary to what was done in \cite{PZ10}, we bound the rest term $R_x$ in the development below by a $q^{\text{th}}$ power of the add-one costs, with $q \in [1,2]$. Lemma~3.1 in \cite{PZ10} corresponds to the choice of $q=2$.
\begin{lem}\label{lem3.1}
	Let $F=(F_1,...,F_m)$ for some $m \geq 1$, where $F_i \in L^2(\p_\eta) \cap \dom \mal$ and $\E F_i=0$ for $1 \leq i \leq m$. Then for all $\phi \in \mathcal{C}^2(\R^m)$ with $\norm{\phi'}_\infty,\kder{\phi}{2}<\infty$, it holds that for a.e. $x\in\X$ and all $q\in[1,2]$,
	\[
	\mal_x \phi(F) = \sum_{i=1}^{m} \frac{\partial \phi}{\partial x_i}(F) \mal_x F_i + R_x,
	\]
	where
	\[
	|R_x| \leq \left(2\norm{\phi'}_\infty \vee \tfrac{1}{2}\kder{\phi}{2}\right) \left(\sum_{i=1}^{m} |\mal_x F_i|\right)^q.
	\]
\end{lem}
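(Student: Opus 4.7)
The plan is to recognise the right-hand side of the claimed identity as the first-order Taylor expansion of $\phi$ about the point $F$, with increment $\Delta:=(\mal_x F_1,\ldots,\mal_x F_m)$. Indeed, by the very definition of the add-one cost, $\mal_x\phi(F)=\phi(F+\Delta)-\phi(F)$, so the remainder is automatically
\[
R_x = \phi(F+\Delta) - \phi(F) - \sum_{i=1}^m \frac{\partial\phi}{\partial x_i}(F)\,\mal_xF_i .
\]
Write $S:=\sum_{i=1}^m|\mal_x F_i|$. The crucial step is to estimate $|R_x|$ in two different ways, and then interpolate between them to recover every $q\in[1,2]$.

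For the first estimate, I would invoke the integral form of Taylor's theorem to second order, writing
\[
R_x = \int_0^1 (1-t)\sum_{i,j=1}^m \frac{\partial^2\phi}{\partial x_i\partial x_j}(F+t\Delta)\,\mal_xF_i\,\mal_xF_j\,dt,
\]
whence $|R_x|\le \tfrac12\kder{\phi}{2}\,S^2$. For the second estimate, I would use the first-order integral identity
\[
\phi(F+\Delta)-\phi(F)=\sum_{i=1}^m\mal_xF_i\int_0^1\frac{\partial\phi}{\partial x_i}(F+t\Delta)\,dt,
\]
subtract the linear term, and apply the crude bound $|\partial_i\phi(F+t\Delta)-\partial_i\phi(F)|\le 2\norm{\phi'}_\infty$ to conclude that $|R_x|\le 2\norm{\phi'}_\infty S$.

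To combine the two bounds into the single statement for $q\in[1,2]$, I would split on the size of $S$. If $S\le 1$, then $S^2\le S^q$ (because $q\le 2$), so the second-order bound gives $|R_x|\le\tfrac12\kder{\phi}{2}S^q$. If $S>1$, then $S\le S^q$ (because $q\ge 1$), so the first-order bound gives $|R_x|\le 2\norm{\phi'}_\infty S^q$. In either case we obtain the claimed inequality with the prefactor $2\norm{\phi'}_\infty\vee\tfrac12\kder{\phi}{2}$.

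There is essentially no hard step in this proof: once one has the idea of producing a first-order \emph{and} a second-order bound and then interpolating via the sign of $S-1$, everything follows from elementary Taylor analysis. The only subtlety worth double-checking is that the integral Taylor expansions are applied to the \emph{realization} $F=F(\eta)$ and $F+\Delta=F(\eta+\delta_x)$, so that the identity holds pointwise in $\omega$ for a.e.\ $x\in\X$ (using the assumptions $F_i\in L^2(\p_\eta)\cap\dom\mal$, which guarantee that the add-one costs $\mal_xF_i$ are well defined outside of a $\lambda$-null set of $x$'s).
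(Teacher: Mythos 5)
Your proof is correct and follows essentially the same route as the paper. Both use the two complementary estimates $|R_x|\le 2\norm{\phi'}_\infty S$ and $|R_x|\le \tfrac12\kder{\phi}{2}S^2$ and then interpolate via $S\wedge S^2\le S^q$ for $q\in[1,2]$; the only cosmetic difference is that the paper obtains both estimates from the single first-order integral representation of $R_x$ by bounding the integrand via the minimum of the trivial bound and the mean-value-theorem bound, whereas you derive the second one from the second-order integral form of Taylor's theorem --- the two are interchangeable and give the same constants.
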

\begin{proof}
	One has that
	\begin{align}
		\mal_x \phi(F) &= \phi(F + \mal_xF) - \phi(F) \notag\\
		&= \int_0^1 \sum_{i=1}^{m} \frac{\partial \phi}{\partial x_i} (F+t\mal_x F_i) \cdot \mal_x F_i\, dt \notag\\
		&= \sum_{i=1}^{m} \frac{\partial \phi}{\partial x_i} (F) \cdot \mal_x F_i + R_x,
	\end{align}
	where
	\[
	R_x = \sum_{i=1}^{m} \int_0^1 \left(\frac{\partial \phi}{\partial x_i}(F+t\mal_xF) - \frac{\partial \phi}{\partial x_i}(F)\right) \cdot \mal_x F_i \,dt.
	\]
	Note that
	\[
	\left|\frac{\partial \phi}{\partial x_i}(F+t\mal_xF) - \frac{\partial \phi}{\partial x_i}(F)\right| \leq \left(2 \norm{\phi'}_\infty\right) \wedge \left(t\kder{\phi}{2} \norm{\mal_xF}\right)
	\]
	by the mean value theorem. Hence
	\begin{align}
		|R_x| &\leq \left(2 \norm{\phi'}_\infty \sum_{i=1}^{m} |\mal_x F_i|\right) \wedge \left(\tfrac{1}{2} \kder{\phi}{2} \norm{\mal_xF} \sum_{i=1}^{m} |\mal_xF_i|\right) \notag\\
		&\leq \left(2\norm{\phi'}_\infty \vee \frac{1}{2}\kder{\phi}{2}\right) \left[\sum_{i=1}^{m} |\mal_xF_i| \wedge \left(\sum_{i=1}^{m} |\mal_xF_i|\right)^2\right]\notag\\
		&\leq \left(2\norm{\phi'}_\infty \vee \frac{1}{2}\kder{\phi}{2}\right) \left(\sum_{i=1}^{m} |\mal_xF_i|\right)^q.
	\end{align}
\end{proof}
The next lemma is an improvement of Lemma~4.1 in \cite{PZ10}, where the improvement comes from the fact that we use \Cref{lem3.1} in the final step. For the definition of the operator $\opL$, see \eqref{eq:DefopL}.
\begin{lem}\label{lem4.1}
	Let $m \geq 1$ and for $0 \leq i \leq m$, let $F_i \in L^2(\p_\eta) \cap \dom\mal$ and assume $\E F_i=0$. Then for all $g \in \mathcal{C}^2(\R^m)$ such that $\norm{g'}_\infty,\kder{g}{2}<\infty$, we have
	\[
	\E g(F_1,...,F_m)F_0 = \E \sum_{i=1}^{m} \frac{\partial g}{\partial x_i}(F_1,...,F_m) \langle \mal F_i, -\mal \opL F_0 \rangle_{L^2(\lambda)} + R,
	\]
	where for all $q\in[1,2]$,
	\[
	 |R| \leq \left(2\norm{g'}_\infty \vee \tfrac{1}{2} \kder{g}{2}\right) \int_\X \E \left(\sum_{k=1}^{m} |\mal_xF_k|\right)^q |\mal_x\opL F_0| \lambda(dx).
	\]
\end{lem}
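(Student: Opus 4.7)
The plan is to reduce the statement to \Cref{lem3.1} by first invoking a Malliavin integration-by-parts identity. Since each $F_i \in L^2(\p_\eta) \cap \dom \mal$ and $g$ has bounded first partial derivatives, the pointwise inequality $|\mal_x g(F)| \leq \norm{g'}_\infty \sum_i |\mal_x F_i|$ (an immediate consequence of the mean-value theorem) combined with the square-integrability of each $\mal F_i$ ensures that $g(F) \in \dom \mal$. I would then apply the Poisson integration-by-parts formula of Appendix~\ref{secApp}, of the form $\E[\phi\,F_0] = \E \langle \mal \phi,\, -\mal \opL F_0\rangle_{L^2(\lambda)}$ for centred $F_0$, with $\phi = g(F)$, to obtain
\begin{equation*}
\E[g(F_1,\ldots,F_m) F_0] \;=\; \E \langle \mal g(F_1,\ldots,F_m),\, -\mal \opL F_0 \rangle_{L^2(\lambda)}.
\end{equation*}

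Next, I would insert the pointwise expansion from \Cref{lem3.1},
\begin{equation*}
\mal_x g(F) \;=\; \sum_{i=1}^{m} \frac{\partial g}{\partial x_i}(F)\,\mal_x F_i \;+\; R_x,
\end{equation*}
whose remainder $R_x$ satisfies, for any $q \in [1,2]$, the bound $|R_x| \leq \bigl(2\norm{g'}_\infty \vee \tfrac{1}{2}\kder{g}{2}\bigr) \bigl(\sum_k |\mal_x F_k|\bigr)^q$. Splitting the resulting $L^2(\lambda)$-inner product into a principal contribution plus an error term yields directly the leading expression in the statement, together with the identification
\begin{equation*}
R \;=\; \E \int_\X R_x \cdot (-\mal_x \opL F_0)\, \lambda(dx).
\end{equation*}
Inserting the pointwise control on $R_x$ into this integral, after a Fubini exchange of expectation and $\lambda$-integral, then produces the claimed bound on $|R|$.

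The substantive work has already been carried out in \Cref{lem3.1}; what remains is bookkeeping. The only delicate points are verifying that $g(F)\in \dom \mal$ (addressed above) and that $\mal \opL F_0 \in L^2(\p_\eta \otimes \lambda)$, the latter following from $F_0 \in L^2(\p_\eta) \cap \dom \mal$ via standard spectral properties of $\opL$ collected in Appendix~\ref{secApp} (these are the same justifications underlying Lemma~4.1 of \cite{PZ10}, which our result parallels). I do not anticipate any genuine obstacle beyond these integrability checks: the improvement over \cite[Lemma~4.1]{PZ10} is entirely inherited from the sharper remainder in \Cref{lem3.1}, which replaces the $q=2$ estimate used in \cite{PZ10} by one available for every $q\in[1,2]$.
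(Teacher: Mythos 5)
Your proposal is correct and follows essentially the same route as the paper's proof: Malliavin integration by parts via \Cref{lem:opL} to write $\E[g(F)F_0]$ as $\E\langle \mal g(F), -\mal\opL F_0\rangle_{L^2(\lambda)}$, then substitution of the pointwise expansion and remainder bound from \Cref{lem3.1}. The integrability checks you flag ($g(F)\in\dom\mal$, $\mal\opL F_0\in L^2$) are a slightly more careful version of what the paper leaves implicit, referring to \cite{PZ10} for the details.
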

\begin{proof}
	Using integration by parts (see \Cref{lem:opL}) as detailed in the proof of \cite[Lemma~4.1]{PZ10}, we derive that
	\begin{align}
		\E g(F_1,...,F_m)F_0 &= \E \langle \mal g(F_1,...,F_m),-\mal \opL F_0\rangle_{L^2(\lambda)} \\
		&= \E \sum_{i=1}^{m} \frac{\partial g}{\partial x_i} (F_1,...,F_m) \langle \mal F_i, -\mal \opL F_0 \rangle_{L^2(\lambda)} + \E \langle R_*,-\mal \opL F_0 \rangle_{L^2(\lambda)},
	\end{align}
	where $R_*$ is a rest term satisfying the bound in \Cref{lem3.1}. The claimed result follows with $R:= \E \langle R_*,-\mal \opL F_0 \rangle_{L^2(\lambda)}$ .
\end{proof}
The next proposition is an extension of Theorems~3.3 and 4.2 in \cite{PZ10} and exploits much of the arguments rehearsed in the proofs of these theorems, which are combined with the content of \Cref{lem3.1,lem4.1}.
\begin{prop}\label{propMulti}
	Let $m \geq 1$ and let $F=(F_1,...,F_m)$ be an $\R^m$-valued random vector such that for $1\leq i \leq m$, we have $F_i \in L^2(\p_\eta) \cap \dom \mal$ and $\E F_i=0$. Let $C=(C_{ij})_{1\leq i,j \leq m}$ be a symmetric positive-semidefinite matrix and let $X\sim \mathcal{N}(0,C)$. Then for all $q \in [1,2]$,
	\begin{multline}
		d_3(F,X) \leq \tfrac{1}{2} \sum_{i,j=1}^{m} \E \left[|C_{ij} - \langle \mal F_i, -\mal \opL F_j \rangle_{L^2(\lambda)}|\right] \\+ \sum_{i=1}^{m} \int_\X \E  \left[\left(\sum_{j=1}^{m} |\mal_x F_j|\right)^q |\mal_x \opL F_i|\right] \lambda(dx).
	\end{multline}
	If moreover the matrix $C$ is positive-definite, then for all $q \in [1,2]$,
	\begin{align}
		d_2(F,X) \leq& \norm{C^{-1}}_{op} \norm{C}_{op}^{1/2} \sum_{i,j=1}^{m} \E \left[|C_{ij} - \langle \mal F_i, -\mal \opL F_j \rangle_{L^2(\lambda)}|\right] \notag\\
		&+ \left(2\norm{C^{-1}}_{op} \norm{C}_{op}^{1/2} \vee \frac{\sqrt{2\pi}}{8}\norm{C^{-1}}_{op}^{3/2} \norm{C}_{op} \right) \notag\\
		&\hspace{5cm}\sum_{i=1}^{m} \int_\X \E  \left[\left(\sum_{j=1}^{m} |\mal_x F_j|\right)^q |\mal_x \opL F_i|\right] \lambda(dx).
	\end{align}
\end{prop}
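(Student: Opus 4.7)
The proof splits into the $d_2$ bound, obtained via the multivariate Stein equation, and the $d_3$ bound, obtained via a smart-path interpolation borrowed from the spin-glass literature. The common engine is \Cref{lem4.1}, which couples Malliavin integration by parts with the refined Taylor estimate of \Cref{lem3.1}; this is exactly what allows one to replace the quadratic $(\sum_k|\mal_xF_k|)^2$ appearing in the bounds of \cite{PZ10} by the softer $(\sum_k|\mal_xF_k|)^q$ for any $q\in[1,2]$.

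For the $d_2$ bound, I fix $h\in\mathcal{H}^{(2)}_m$ and let $f_h$ be the canonical Stein solution defined by \eqref{eqcansol}. Evaluating the Stein equation \eqref{eqMulStein} at $F$ and taking expectations gives
\[
\E h(F)-\E h(X) = \sum_{i=1}^m \E[F_i\,\partial_i f_h(F)] - \sum_{i,j=1}^m C_{ij}\,\E[\partial_{ij} f_h(F)].
\]
I then apply \Cref{lem4.1} to each term $\E[F_i\,\partial_i f_h(F)]$ with the choice $g=\partial_i f_h$, using $\norm{g'}_\infty\le\kder{f_h}{2}$ and $\kder{g}{2}\le\kder{f_h}{3}$. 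Recombining with the Gaussian term yields
\[
\E h(F)-\E h(X)=\sum_{i,j=1}^m\E\!\left[\partial_{ij}f_h(F)\bigl(\langle\mal F_j,-\mal\opL F_i\rangle_{L^2(\lambda)}-C_{ij}\bigr)\right]+\sum_{i=1}^m R_i,
\]
with $|R_i|$ controlled by $\bigl(2\kder{f_h}{2}\vee\tfrac12\kder{f_h}{3}\bigr)\int_\X\E[(\sum_k|\mal_xF_k|)^q|\mal_x\opL F_i|]\,\lambda(dx)$. Taking absolute values, then substituting \eqref{eqH1}--\eqref{eqH2} together with $\norm{h'}_\infty,\sup_x\norm{\opHess h}_{op}\le 1$, produces the announced $d_2$ estimate after a harmless index relabeling in the covariance discrepancy.

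For the $d_3$ bound, I fix $h\in\mathcal{H}^{(3)}_m$ and take $X\sim\mathcal{N}(0,C)$ independent of $F$. Set $Z_t:=\sqrt{t}\,F+\sqrt{1-t}\,X$ and $\psi(t):=\E h(Z_t)$, so that $\E h(F)-\E h(X)=\int_0^1\psi'(t)\,dt$. A direct differentiation gives
\[
\psi'(t)=\frac{1}{2\sqrt t}\sum_{i=1}^m\E[F_i\,\partial_i h(Z_t)]-\frac{1}{2\sqrt{1-t}}\sum_{i=1}^m\E[X_i\,\partial_i h(Z_t)].
\]
For the Gaussian half, conditioning on $F$ and applying multivariate Gaussian integration by parts yields $\tfrac12\sum_{i,j}C_{ij}\,\E[\partial_{ij}h(Z_t)]$. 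For the Poisson half, conditioning on $X$ and applying \Cref{lem4.1} with $g(y):=\partial_i h(\sqrt t\, y+\sqrt{1-t}\,X)$ produces, via the chain rule, $\norm{g'}_\infty\le\sqrt t\,\kder{h}{2}$ and $\kder{g}{2}\le t\,\kder{h}{3}$. After matching both halves, $\psi'(t)$ splits into $\tfrac12\sum_{i,j}\E[\partial_{ij}h(Z_t)(\langle\mal F_j,-\mal\opL F_i\rangle_{L^2(\lambda)}-C_{ij})]$ plus a Poisson remainder whose prefactor $\tfrac{1}{2\sqrt t}\cdot(2\sqrt t\vee t/2)$ is uniformly bounded by $1$ on $[0,1]$. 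Integrating $|\psi'(t)|$ over $[0,1]$ and using $\kder{h}{2},\kder{h}{3}\le 1$ yields the announced $d_3$ bound.

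The main subtlety in both parts is ensuring the Taylor remainder is controlled by the single $q$-th power $(\sum_k|\mal_xF_k|)^q$ rather than the quadratic version, which is precisely the content of \Cref{lem3.1,lem4.1}. In the $d_3$ argument, the additional delicate point is that $\psi'(t)$ has an apparent singularity at $t=0$: the crucial cancellation is that the $\sqrt t$ factor generated by the chain rule in $g$ exactly absorbs the $1/\sqrt t$ arising from differentiating $Z_t$, so that the remainder contribution is integrable; the analogous cancellation at $t=1$ comes from Gaussian integration by parts. Everything else is bookkeeping with the previously established regularity bounds \eqref{eqH1}--\eqref{eqH2} and the two preparatory lemmas.
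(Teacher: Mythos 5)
Your proof is correct and follows essentially the same approach as the paper's: a multivariate Stein argument combined with \Cref{lem4.1} for the $d_2$ bound, and a smart-path interpolation combined with \Cref{lem4.1} and the chain-rule bounds for the $d_3$ bound. The only differences are cosmetic --- you use the parametrization $\sqrt{t}F+\sqrt{1-t}X$ and integrate $|\psi'|$ where the paper uses $\sqrt{1-t}F+\sqrt{t}X$ and takes $\sup_t|\psi'(t)|$, and in the $d_2$ part you invoke \Cref{lem4.1} directly where the paper first writes out the Malliavin integration by parts and then applies \Cref{lem3.1} to $\partial_k f_g(F)$, which amounts to the same thing.
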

\begin{proof}
	To show the bound on the $d_3$ distance, we proceed as in the proof of Theorem~4.2 in \cite{PZ10}, but we replace the use of Lemma~4.1 therein with our \Cref{lem4.1}. Indeed, we only need to show that
	\begin{multline}
		\left|\E[\phi(F)] - \E[\phi(X)]\right| \leq \tfrac{1}{2} \kder{\phi}{2} \sum_{i,j=1}^{m} \E \left[|C_{ij} - \langle \mal F_i, -\mal \opL F_j \rangle_{L^2(\lambda)}\right] \\+ \tfrac{1}{2} \left(2\kder{\phi}{2} \vee \tfrac{1}{2} \kder{\phi}{3}\right) \sum_{i=1}^{m} \int_\X \E  \left[\left(\sum_{j=1}^{m} |\mal_x F_j|\right)^p |\mal_x \opL F_i|\right] \lambda(dx)
	\end{multline}
	for any $\phi \in \mathcal{C}^3(\R^m)$ with bounded second and third partial derivatives. Defining
	\[
	\psi(t):= \E \phi(\sqrt{1-t}F+\sqrt{t}X),
	\]
	it is clear that 
	\[
	|\E \phi(F) - \E \phi(X)| \leq \sup_{t \in (0,1)} |\psi'(t)|.
	\]
	Defining moreover
	\[
	\phi_i^{t,b}(x) := \frac{\partial \phi}{\partial x_i}(\sqrt{1-t}x+\sqrt{t}b)
	\]
	for any vector $b \in \R^m$, it is shown in the proof of \cite[Thm.~4.2]{PZ10} that $\psi'(t)$ can be written as
	\[
	\psi'(t)=\frac{1}{2\sqrt{t}} \mathcal{A} - \frac{1}{2\sqrt{1-t}} \mathcal{B},
	\]
	where
	\[
	\mathcal{A} = \sqrt{t} \sum_{i,j=1}^{m} C_{ij} \E \frac{\partial^2\phi}{\partial x_i \partial x_j} (\sqrt{1-t}F + \sqrt{t}X)
	\]
	and
	\[
	\mathcal{B} = \sum_{i=1}^{m} \E \frac{\partial \phi}{\partial x_i} (\sqrt{1-t}F + \sqrt{t}X) F_i.
	\]
	Conditioning on $X$ in $\mathcal{B}$, one can apply \Cref{lem4.1} and deduce that
	\[
		\mathcal{B} = \sqrt{1-t} \sum_{i,j=1}^{m} \E \frac{\partial^2 \phi}{\partial x_i \partial x_j} (\sqrt{1-t}F + \sqrt{t}X) \langle \mal F_i, -\mal\opL F_j \rangle + \sum_{i=1}^{m} \E \left[R_X^i\right],
	\]
	where $R_X^i$ satisfies
	\[
		|R_X^i| \leq \left(\tfrac{1}{2} \norm{(\phi_i^{t,X})^{(2)}}_\infty \vee 2 \norm{(\phi_i^{t,X})'}_\infty\right) \int_\X \E \left(\sum_{j=1}^{m} |\mal_x F_j|\right)^q |\mal_x \opL F_i| \lambda(dx).
	\]
	It suffices now to observe that
	\begin{align}
		&\left|\frac{\partial^2 \phi}{\partial x_i \partial x_j} (\sqrt{1-t}F + \sqrt{t}X)\right| \leq \kder{\phi}{2} \\
		\intertext{and}
		&\left|\frac{\partial \phi_i^{t,b}}{\partial x_j} (x)\right| = \sqrt{1-t} \left|\frac{\partial^2 \phi}{\partial x_i \partial x_j} (\sqrt{1-t}x + \sqrt{t}b)\right| \leq \sqrt{1-t} \kder{\phi}{2} \\
		\intertext{and}
		&\left|\frac{\partial^2 \phi_i^{t,b}}{\partial x_j\partial x_k} (x)\right| = (1-t) \left|\frac{\partial^3 \phi}{\partial x_i \partial x_j \partial x_k} (\sqrt{1-t}x + \sqrt{t}b)\right| \leq (1-t) \kder{\phi}{3}
	\end{align}
	to deduce that
	\begin{align}
		\sup_{t \in (0,1)} |\psi'(t)| \leq& \tfrac{1}{2} \kder{\phi}{2} \sum_{i,j=1}^{m} \E \left[\left|C_{ij} - \langle \mal F_i, -\mal \opL F_j \rangle_{L^2(\lambda)}\right|\right] \notag\\
		&+ \sup_{t \in (0,1)} \frac{1}{2 \sqrt{1-t}} \left(2\sqrt{1-t}\kder{\phi}{2} \vee \tfrac{1}{2} (1-t) \kder{\phi}{3}\right)\notag\\
		&\hspace{5cm}\sum_{i=1}^{m} \int_\X \E  \left[\left(\sum_{j=1}^{m} |\mal_x F_j|\right)^q |\mal_x \opL F_i|\right] \lambda(dx).
	\end{align}
	The bound for the $d_3$ distance now follows.
	
	For the $d_2$ distance, as argued in the proof of Theorem~3.3 in \cite{PZ10}, it is enough to show that
	\begin{align}\label{Rainbow123}
		|\E[g(F) - g(X)]| \leq& A \norm{C^{-1}}_{op} \norm{C}_{op}^{1/2} \sum_{i,j=1}^m \E \left|C_{ij} - \langle \mal F_i, -\mal\opL F_j \rangle_{L^2(\lambda)} \right| \notag\\
		&+ \left(2A \norm{C^{-1}}_{op} \norm{C}_{op}^{1/2} \vee \frac{\sqrt{2\pi}}{8}B \norm{C^{-1}}_{op}^{3/2} \norm{C}_{op}\right)\notag\\
		&\hspace{4cm}\sum_{i=1}^{m} \int_\X \E  \left[\left(\sum_{j=1}^{m} |\mal_x F_j|\right)^q |\mal_x \opL F_i|\right] \lambda(dx),
	\end{align}
	for smooth functions $g \in \mathcal{C}^\infty(\R^m)$ whose first- and second-order derivatives are bounded in such a way that $\norm{g'}_\infty \leq A$ and $\sup_{x \in \R^m} \norm{\opHess g(x)}_{op} \leq B$. We now proceed as in the proof of Theorem~3.3 in \cite{PZ10} to deduce that
	\[
	\E g(F) - \E g(X) = \sum_{i,j=1}^{m} \E \left[C_{ij} \frac{\partial^2f_g}{\partial x_i \partial x_j} (F)\right] - \sum_{k=1}^{m} \E \left[\langle \mal \left(\frac{\partial f_g}{\partial x_k}(F)\right),-\mal \opL F_k \rangle_{L^2(\lambda)}\right],
	\]
	where $f_g$ is the canonical solution \eqref{eqcansol} to the multivariate Stein equation \eqref{eqMulStein}. Define $\phi_k(x) := \frac{\partial f_g}{\partial x_k}(x)$. By \Cref{lem3.1}, we have that
	\[
	\mal_x \phi_k(F) = \sum_{i=1}^{m} \frac{\partial \phi_k}{\partial x_i}(F) \cdot \mal_x F_i + R_{x,k},
	\]
	where
	\[
	|R_{x,k}| \leq \left(2 \norm{\phi_k'}_\infty \vee \tfrac{1}{2} \norm{\phi_k^{(2)}}_\infty\right) \left(\sum_{i=1}^{m} |\mal_x F_i|\right)^q.
	\]
	It follows that
	\begin{multline}
		|\E g(F) - \E g(X)| \leq \norm{f_g^{(2)}}_\infty \sum_{i,j=1}^m \E \left|C_{ij} - \langle \mal F_i, -\mal\opL F_j \rangle_{L^2(\lambda)} \right| \\
		+ \sup_{1\leq k \leq m}\left(2 \norm{\phi_k'}_\infty \vee \tfrac{1}{2} \norm{\phi_k^{(2)}}_\infty\right) \sum_{i=1}^{m} \int_\X \E  \left[\left(\sum_{j=1}^{m} |\mal_x F_j|\right)^q |\mal_x \opL F_i|\right] \lambda(dx).
	\end{multline}
	To see that \eqref{Rainbow123} holds, it suffices now to see that by \eqref{eqH1} and \eqref{eqH2}, we have
	\begin{align}
		&\kder{f_g}{2} \leq \norm{C^{-1}}_{op} \norm{C}_{op}^{1/2} \norm{g'}_\infty \leq A\norm{C^{-1}}_{op} \norm{C}_{op}^{1/2}\\
		&\norm{\phi_k'}_\infty \leq \kder{f_g}{2} \leq A\norm{C^{-1}}_{op} \norm{C}_{op}^{1/2}\\
		&\kder{\phi}{2} \leq \kder{f_g}{3} \leq \frac{\sqrt{2\pi}}{4} \norm{C^{-1}}_{op}^{3/2} \norm{C}_{op} \sup_{x \in \R^m} \norm{\opHess g(x)}_{op} \leq \frac{\sqrt{2\pi}}{4}B \norm{C^{-1}}_{op}^{3/2} \norm{C}_{op}.
	\end{align}
	This concludes the proof.
\end{proof}

\begin{proof}[Proof of \Cref{thmMulti}]
	Using \Cref{propMulti}, it suffices to show that
	\begin{equation}\label{eqFrodo1}
		\sum_{i,j=1}^{m} \E \left[\left|C_{ij} - \langle \mal F_i, -\mal \opL F_j \rangle_{L^2(\lambda)}\right|\right] \leq \zeta_1^{(p)} + \zeta_2^{(p)} + \zeta_3^{(p)}
	\end{equation}
	and
	\begin{equation}\label{eqFrodo2}
		\sum_{i=1}^{m} \int_\X \E  \left[\left(\sum_{j=1}^{m} |\mal_x F_j|\right)^q |\mal_x \opL F_i|\right] \lambda(dx) \leq \zeta_4^{(q)}.
	\end{equation}
	
	Fix $i,j\in \{1,...,m\}$. By the triangle inequality, we have that
	\begin{multline}\label{eqFrodo4}
		\E \left[|C_{ij} - \langle \mal F_i, -\mal \opL F_j \rangle_{L^2(\lambda)}|\right] \\\leq \E \left[|C_{ij} - \cov(F_i,F_j)|\right] + \E \left[|\cov(F_i,F_j) - \langle \mal F_i, -\mal \opL F_j \rangle_{L^2(\lambda)}|\right].
	\end{multline}
	
	Define $G_{ij}:= \cov(F_i,F_j) - \langle \mal F_i, -\mal \opL F_j \rangle_{L^2(\lambda)}$. Then since $F_i,F_j \in \dom \mal$, we have $G_{ij} \in L^1(\p_\eta)$ and by \Cref{lem:opL}, one has $\E G_{ij}=0$.
	
	Using the $p$-Poincaré inequality \eqref{eq:ppoin} given in \Cref{prop:ppoin}, we deduce that for any $p \in [1,2]$,
	\[
	\E |G_{ij}| \leq \E \left[|G_{ij}|^p\right]^{1/p} \leq 2^{2/p-1} \E \left[\int_\X |\mal_xG_{ij}|^p\lambda(dx)\right]^{1/p}.
	\]
	Now note that
	\[
	\mal_x G_{ij} = \mal_x \int_\X \left(\mal_y F_i\right) \cdot \left(-\mal_y \opL F_j\right) \lambda(dy),
	\]
	and by the argument in the proof of \cite[Prop.~4.1, p.~689]{LPS14}, we have that
	\[
	\left|\mal_x \int_\X \left(\mal_y F_i\right) \cdot \left(-\mal_y \opL F_j\right) \lambda(dy)\right| \leq  \int_\X \left|\mal_x\left(\left(\mal_y F_i\right) \cdot \left(-\mal_y \opL F_j\right)\right)\right| \lambda(dy).
	\]
	Using Minkowski's integral inequality, it follows that
	\begin{multline}\label{eqFrodo3}
		\E \left[|\cov(F_i,F_j) - \langle \mal F_i, -\mal \opL F_j \rangle_{L^2(\lambda)}|\right] \\\leq \left(\int_\X \left(\int_\X \E \left[\left|\mal_x\left(\left(\mal_y F_i\right) \cdot \left(-\mal_y \opL F_j\right)\right)\right|^p\right]^{1/p}\lambda(dy)\right)^p\lambda(dx)\right)^{1/p}.
	\end{multline}
	By \eqref{eq:ProdForm}, we have that
	\begin{multline}
		\mal_x\left(\left(\mal_y F_i\right) \cdot \left(-\mal_y \opL F_j\right)\right)\\ = \DD_{x,y}F_i\cdot \left(-\mal_y \opL F_j\right) + \mal_y F_i\cdot \left(-\DD_{x,y} \opL F_j\right) + \DD_{x,y}F_i\cdot \left(-\DD_{x,y} \opL F_j\right).
	\end{multline}
	Using Minkowski's norm inequality, the Cauchy-Schwarz inequality and \Cref{lem:opL}, one sees that
	\begin{align}
		\E \left[\left|\mal_x\left(\left(\mal_y F_i\right) \cdot \left(-\mal_y \opL F_j\right)\right)\right|^p\right]^{1/p}	\leq& \E \left[|\DD_{x,y}F_i|^{2p}\right]^{1/2p} \, \E \left[|\mal_{y}F_j|^{2p}\right]^{1/2p} \notag\\
		&+ \E\left[|\mal_{y}F_i|^{2p}\right]^{1/2p} \, \E \left[|\DD_{x,y}F_j|^{2p}\right]^{1/2p} \notag\\
		&+ \E\left[|\DD_{x,y}F_i|^{2p}\right]^{1/2p} \, \E \left[|\DD_{x,y}F_j|^{2p}\right]^{1/2p}. \label{eqFrodo5}
	\end{align}
	Combining \eqref{eqFrodo4}, \eqref{eqFrodo3} and \eqref{eqFrodo5} yields \eqref{eqFrodo1}.
	
	To show \eqref{eqFrodo2}, note that
	\[
	\left(\sum_{j=1}^{m} |\mal_x F_j|\right)^q \leq m^{q-1} \sum_{j=1}^{m} |\mal_x F_j|^q.
	\]
	The bound \eqref{eqFrodo2} now follows by Hölder inequality and \Cref{lem:opL}.
\end{proof}

\section{Proofs of \Cref{thmGilMulti1,thmGilMulti2}}\label{secPGilMulti}
Throughout this section, we denote by $c$ a positive absolute constant whose value can change from line to line. We will need some technical bounds derived in \cite{TT23} and presented in the next lemma.
\begin{lem}[{\cite[Prop.~F.2.]{TT23}}]\label{lem:GilBounds}
	Let $\alpha>-\frac{d}{2}$ and $r\geq 1$ such that $d+r\alpha>0$. Then $\lat(W) \in \dom \mal$ and there is a constant $c_0>0$ such that for all $x,y \in W$ and $t>0$,
	\begin{align}
		&\E \left[\left(\mal_x \lat\right)^r\right]^{1/r} \leq  c_0 \epsilon_t^\alpha (t\epsilon_t^d)^{1/r} (1 \vee t \epsilon_t^d)^{1-1/r} \label{eq:GilDxBound} \\
		\intertext{and}
		&\DD_{x,y}\lat = \ind{0<\norm{x-y}<\epsilon_t} \norm{x-y}^\alpha. \label{eq:GilDxy}
	\end{align}
\end{lem}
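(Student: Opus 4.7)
The plan is to compute both add-one costs directly from their definitions and then bound the moments of $\mal_x\lat$ through a Rosenthal-type inequality for non-negative Poisson integrals. Adding a point $x\in W$ to $\eta^t$ creates exactly one new edge to each $y\in\eta^t\cap W$ with $0<\norm{x-y}<\epsilon_t$, so
\[
\mal_x \lat = \int_W \ind{0<\norm{x-y}<\epsilon_t}\norm{x-y}^\alpha\,\eta^t(dy),
\]
while $\mal_x\lat=0$ when $x\notin W$. Applying $\mal_y$ to this linear functional of $\eta^t$ adds the integrand evaluated at $y$, which for $x,y\in W$ yields \eqref{eq:GilDxy} immediately. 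Membership of $\lat$ in $\dom\mal$ will follow a posteriori from the $r=2$ case of \eqref{eq:GilDxBound}.

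For \eqref{eq:GilDxBound}, I would set $S:=\mal_x\lat=\int f\,d\eta^t$ with $f(y):=\ind{y\in W,\,0<\norm{x-y}<\epsilon_t}\norm{x-y}^\alpha$, $\lambda(dy)=t\,dy$, and reduce the task to the Rosenthal-type estimate
\[
\E[S^r]^{1/r} \leq c_r\left[\left(\int f^r\,d\lambda\right)^{1/r} + \int f\,d\lambda\right], \qquad r\geq 1,
\]
valid for any non-negative Poisson integrand. A direct polar coordinate computation, using the hypotheses $r\alpha+d>0$ and $\alpha+d>d/2>0$, yields
\[
\int f^r\,d\lambda \leq \frac{t\,d\kappa_d}{r\alpha+d}\,\epsilon_t^{r\alpha+d}, \qquad \int f\,d\lambda \leq \frac{t\,d\kappa_d}{\alpha+d}\,\epsilon_t^{\alpha+d}.
\]
Substituting and factoring out $\epsilon_t^\alpha$ gives
\[
\E[S^r]^{1/r} \leq c\,\epsilon_t^\alpha(t\epsilon_t^d)^{1/r}\bigl[1+(t\epsilon_t^d)^{1-1/r}\bigr],
\]
and since $1-1/r\geq 0$ the bracket is at most $2(1\vee t\epsilon_t^d)^{1-1/r}$, producing \eqref{eq:GilDxBound}.

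The main obstacle is justifying the Rosenthal-type inequality for every $r\geq 1$. For integer $r$, the Mecke--Poisson moment formula expresses $\E[S^r]$ as a sum over set partitions $\pi$ of $\{1,\ldots,r\}$ of products $\prod_{B\in\pi}\int f^{|B|}d\lambda$. A partition with $k$ blocks contributes a product of norms $\|f\|_{L^{|B_i|}(\lambda)}^{|B_i|}$, which I would control by interpolating each $L^{|B_i|}$ norm between $L^1$ and $L^r$ via H\"older; the resulting exponents are $r(k-1)/(r-1)$ on $\|f\|_{L^1(\lambda)}$ and $r(r-k)/(r-1)$ on $\|f\|_{L^r(\lambda)}$, and the mixed product is absorbed by $\|f\|_{L^1(\lambda)}^r + \|f\|_{L^r(\lambda)}^r$. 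Summing over partitions produces a constant proportional to the Bell number. The non-integer case then follows from log-convexity of $r\mapsto\log\E[S^r]$ (Lyapunov's inequality), interpolating between adjacent integers. For the range $r\in[1,2]$ that is actually needed in the applications of this paper, a single Lyapunov interpolation between $r=1$ (Campbell's formula) and $r=2$ (the explicit variance identity $\E[S^2] = \int f^2 d\lambda + (\int f d\lambda)^2$) already gives the claimed bound up to absolute constants, so in practice the argument reduces to two very concrete moment computations.
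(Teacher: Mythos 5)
The paper does not prove this lemma itself; it cites \cite[Prop.~F.2]{TT23}, so there is no internal argument to compare against. Your two add-one cost identities are correct: adding $x\in W$ creates exactly the edges from $x$ to points of $\eta^t_{|W}$ within distance $\epsilon_t$, giving the non-negative linear Poisson integral you write, and one further difference leaves only the deterministic kernel, which is \eqref{eq:GilDxy} on $W\times W$. The reduction of \eqref{eq:GilDxBound} to a Rosenthal-type moment bound for $S=\int f\,d\eta^t$, the polar-coordinate evaluations of $\int f\,d\lambda$ and $\int f^r\,d\lambda$ under $\alpha+d>0$ and $r\alpha+d>0$, and the final factoring into $\epsilon_t^\alpha(t\epsilon_t^d)^{1/r}(1\vee t\epsilon_t^d)^{1-1/r}$ are all correct.

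The weak point is your justification of the Rosenthal inequality for non-integer $r>2$. Lyapunov interpolation between the integer moments at $\lfloor r\rfloor$ and $\lceil r\rceil$ requires $\int f^{\lceil r\rceil}\,d\lambda<\infty$, i.e.\ $\lceil r\rceil\alpha+d>0$; when $\alpha<0$ this is strictly stronger than the hypothesis $r\alpha+d>0$, and it can genuinely fail. This is not a corner case for this paper: the lemma is invoked in \Cref{secPGilMulti} with $r=2p\in[2,4]$ and $r=q+1\in[2,3]$, and in the regime $-\tfrac d2<\alpha\le-\tfrac d4$ the exponent $p$ is taken near $-\tfrac{d}{2\alpha}$, so $r=2p$ lies just below $-d/\alpha$ while $\lceil r\rceil$ may exceed it, making $\E\bigl[S^{\lceil r\rceil}\bigr]=\infty$. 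In particular your closing remark that only $r\in[1,2]$ is needed for this paper's applications is not accurate. The Rosenthal inequality for non-negative Poisson integrals does hold for all real $r\ge1$, so the gap is one of justification rather than of truth; one should either cite it directly or prove it by a route that does not pass through integer moments that may diverge --- for instance compensation together with the $p$-Poincar\'e inequality (\Cref{prop:ppoin}) for $r\in[1,2]$, and a martingale or decoupling argument for general $r>2$.
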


Moreover, we will need some properties of convex bodies. Let $W \subset \R^d$ be a convex body and define its inner parallel set $W_\epsilon$ by 
\begin{equation}\label{eq:Hdef}
W_\epsilon := \{x \in W : \text{dist}(x,\partial W)>\epsilon\},
\end{equation}
for $\epsilon>0$ and where $\text{dist}$ denotes the Euclidean distance and $\partial W$ the boundary of $W$. Since $W$ is a convex set with non-empty interior, the set $W_\epsilon$ is non-empty for $\epsilon>0$ small enough. Combining \cite[equation (3.19)]{HLS16} with Steiner's formula (cf. \cite[equation (14.5)]{StochIntGeo}), one sees that there is a constant $\gamma_W>0$ such that
\begin{equation}\label{eq:Hcond}
	|W \setminus W_\epsilon| \leq \gamma_W \epsilon.
\end{equation}
We can now start with the proof of \Cref{thmGilMulti1}.

\begin{proof}[Proof of \Cref{thmGilMulti1}]
	It suffices to bound the terms $\zeta_1^{(p)},...,\zeta_4^{(q)}$ of \Cref{thmMulti}. We start by giving a bound for $\zeta_1^{(p)}$. Note that is has been shown in \cite[Thm.~3.3]{TSRGilbert} that
	\[
	|W| - \gamma_W \epsilon_t < \frac{\cov\left(\lati{i},\lati{j}\right)}{\sigma_{ij}^{(1)}t^2\epsilon_t^{\alpha_i+\alpha_j+d} + \sigma_{ij}^{(2)}t^3 \epsilon_t^{\alpha_i+\alpha_j+2d}} \leq |W|,
	\]
	where $\gamma_W$ is a constant depending on $W$ such that \eqref{eq:Hcond} holds.
	This implies that
	\[
	\beta_{ij}^{(t)} \left(|W|-\gamma_W\epsilon_t\right) < \cov\left(\tillati{i},\tillati{j}\right) \leq \beta_{ij}^{(t)} |W|,
	\]
	where
	\[
	\beta_{ij}^{(t)} = \frac{\sigma_{ij}^{(1)}t^2\epsilon_t^{\alpha_i+\alpha_j+d} + \sigma_{ij}^{(2)}t^3 \epsilon_t^{\alpha_i+\alpha_j+2d}}{\left(t\epsilon_t^{\alpha_i+d/2} \vee t^{3/2}\epsilon_t^{\alpha_i+d}\right)\left(t\epsilon_t^{\alpha_j+d/2} \vee t^{3/2}\epsilon_t^{\alpha_j+d}\right)} = \frac{\sigma_{ij}^{(1)} + \sigma_{ij}^{(1)} t\epsilon_t^d}{1 \vee t\epsilon_t^d},
	\]
	as defined in \eqref{eq:betadef}. Hence we have
	\[
	\left|\cov\left(\tillati{i},\tillati{j}\right) - C_{ij}\right| \leq \beta_{ij}^{(t)}\gamma_W \epsilon_t + \left||W|\beta_{ij}^{(t)}-C_{ij}\right|
	\]
	and thus
	\[
	\zeta_1^{(p)} \leq \sum_{i,j=1}^{m} \left(\beta_{ij}^{(t)}\gamma_W \epsilon_t + \left||W|\beta_{ij}^{(t)}-C_{ij}\right|\right) \leq c \left(\epsilon_t + \max_{1 \leq i,j \leq m} \left|\beta_{ij}^{(t)} - c_{ij}\right|\right)
	\]
	for some constant $c>0$.
	
	Next, we bound $\zeta_2^{(p)}$. For this we use the expressions given in \Cref{lem:GilBounds}. We deduce from \eqref{eq:GilDxy} that
	\begin{multline}
		\zeta_2^{(p)} = 2^{2/p-1} \sum_{i,j=1}^m \left(t \epsilon_t^{\alpha_i+d/2} \vee t^{3/2} \epsilon_t^{\alpha_i+d}\right)^{-1} \left(t \epsilon_t^{\alpha_j+d/2} \vee t^{3/2} \epsilon_t^{\alpha_j+d}\right)^{-1} \\ \left(\int_W \left(\int_W \ind{|x-y|<\epsilon_t} |x-y|^{\alpha_i + \alpha_j}tdx\right)^{p}tdy\right)^{1/p}.
	\end{multline}
	Note that the inner integral is upper bounded by
	\[
	\frac{d\kappa_d}{d+\alpha_i+\alpha_j} t\epsilon_t^{d+\alpha_i+\alpha_j},
	\]
	and hence we deduce, after simplification,
	\[
	\zeta_2^{(p)} \leq 2^{2/p-1} |W|^{1/p} \left(\sum_{i,j=1}^m \frac{d\kappa_d}{d+\alpha_i+\alpha_j}\right) t^{-1+1/p} \left(1 \vee t\epsilon_t^d\right)^{-1}.
	\]
	For $\zeta_3^{(p)}$, after plugging in the bounds from \Cref{lem:GilBounds}, we get
	\begin{multline}
		\zeta_3^{(p)} \leq 2^{2/p}c \sum_{i,j=1}^m \left(t \epsilon_t^{\alpha_i+d/2} \vee t^{3/2} \epsilon_t^{\alpha_i+d}\right)^{-1} \left(t \epsilon_t^{\alpha_j+d/2} \vee t^{3/2} \epsilon_t^{\alpha_j+d}\right)^{-1} \\ \left(\int_W \left(\int_W \epsilon_t^{\alpha_i} (t\epsilon_t^d)^{1/(2p)}(1 \vee t\epsilon_t^d)^{1-1/(2p)} \ind{|x-y|<\epsilon_t} |x-y|^{\alpha_j}tdx\right)^{p}tdy\right)^{1/p}.
	\end{multline}
	After simplification, this bound yields
	\[
	\zeta_3^{(p)} \leq 2^{2/p}c|W|^{1/p} m\left(\sum_{i=1}^m \frac{d\kappa_d}{d+\alpha_i}\right) t^{-1+1/p} \left(1 \wedge t\epsilon_t^d\right)^{1/(2p)}.
	\]
	For $\zeta_4^{(q)}$, we plug in the first bound from \Cref{lem:GilBounds} and deduce
	\begin{multline}
		\zeta_4^{(q)} \leq m^{q-1}c^2 \sum_{i,j=1}^m \left(t \epsilon_t^{\alpha_i+d/2} \vee t^{3/2} \epsilon_t^{\alpha_i+d}\right)^{-1} \left(t \epsilon_t^{\alpha_j+d/2} \vee t^{3/2} \epsilon_t^{\alpha_j+d}\right)^{-q} \\\int_W \epsilon_t^{\alpha_i} (t\epsilon_t^d)^{1/(q+1)} (1 \vee t\epsilon_t^d)^{1-1/(q+1)} \epsilon_t^{q\alpha_j} (t\epsilon_t^d)^{q/(q+1)} (1 \vee t\epsilon_t^d)^{q-q/(q+1)} tdx,
	\end{multline}
	which, after simplification, yields
	\[
	\zeta_4^{(q)} \leq m^q c^2 |W| t^{(1-q)/2} (1 \wedge t\epsilon_t^d)^{(1-q)/2}.
	\]
	If we take $q=3-\frac{2}{p}$, then $q \in (1,2]$ and $(q+1)\alpha_i + d>2p\alpha_i+d>0$ and we get
	\[
	\zeta_2^{(p)} + \zeta_3^{(p)} + \zeta_4^{(q)} \leq c \left(t^{-1+1/p} \vee (t^2\epsilon_t^d)^{-1+1/p}\right),
	\]
	which concludes the proof.
\end{proof}
\begin{proof}[Proof of \Cref{thmGilMulti2}]
	As a first step, we compute the asymptotic covariance matrix of the vector $\tilde{F}_t$. Define the functions
	\[
	h_i(x,y) := \ind{|x-y|<\epsilon_t}\ind{x \in W_i}\ind{y \in W_i} |x-y|^{\alpha}, \quad i=1,\ldots,m.
	\]
	Then it holds that
	\[
	F_t^{(i)} = \frac{1}{2} \iint_{(\R^d)^2} h_i(x,y) (\eta^t)^{(2)}(dx,dy),
	\]
	where $(\eta^t)^{(2)}$ denotes the product measure $\eta^t \otimes \eta^t$. We deduce that for $i,j \in \{1,\ldots,m\}$,
	\begin{multline}\label{someRainbow}
		\cov\left(F_t^{(i)},F_t^{(j)}\right) = \frac{1}{4} \E \left[\iint_{(\R^d)^2} h_i(x,y) (\eta^t)^{(2)}(dx,dy) \iint_{(\R^d)^2} h_j(z,w) (\eta^t)^{(2)}(dz,dw)\right] \\ - \frac{1}{4} \E\left[\iint_{(\R^d)^2} h_i(x,y) (\eta^t)^{(2)}(dx,dy)\right]\E\left[\iint_{(\R^d)^2} h_j(z,w) (\eta^t)^{(2)}(dz,dw)\right].
	\end{multline}
	Since $\eta^t$ is a point measure, in the first term on the RHS of \eqref{someRainbow}, it is possible to have $x=z$ or $x=w$ or similar equalities, which constitute the diagonals of the sets we are summing over. Using the Mecke formula \eqref{eq:Mecke} and isolating these diagonals, one sees that 
	\begin{align}
		\cov\left(F_t^{(i)},F_t^{(j)}\right) &= \frac{1}{4}\iint_{(\R^d)^2} h_i(x,y) t^2\, dxdy \cdot \iint_{(\R^d)^2} h_j(z,w) t^2\, dzdw \notag\\
		&\phantom{=}+ \iiint_{(\R^d)^3} h_i(x,y)h_j(y,w) t^3\, dxdydw \notag\\
		&\phantom{=}+ \frac{1}{2}\iint_{(\R^d)^2} h_i(x,y)h_j(x,y) t^2\, dxdy\notag\\
		&\phantom{=}-\frac{1}{4}\iint_{(\R^d)^2} h_i(x,y) t^2\, dxdy \cdot \iint_{(\R^d)^2} h_j(z,w) t^2\, dzdw.
	\end{align}
	The first and the last term cancel, thus we are left with
	\begin{equation}\label{iP11cov}
		\cov\left(F_t^{(i)},F_t^{(j)}\right) = \iiint_{(\R^d)^3} h_i(x,y)h_j(y,w) t^3\, dxdydw + \frac{1}{2}\iint_{(\R^d)^2} h_i(x,y)h_j(x,y) t^2\, dxdy.
	\end{equation}
	We start by computing the first term on the RHS of \eqref{iP11cov}. We have
	\begin{multline}\label{iP11eq1}
		\iiint_{(\R^d)^3} h_i(x,y)h_j(y,w) t^3\, dxdydw \\= t^3 \int_{W_i \cap W_j}dy\int_{W_i}dx\int_{W_j}dw \ind{x \in B(y,\epsilon_t)} \ind{w \in B(y,\epsilon_t)} |x-y|^{\alpha} |w-y|^\alpha.
	\end{multline}
	Recall from \eqref{eq:Hdef} the definition of the (possibly empty) inner parallel set
	\[
	W_\epsilon := \{z \in W : \dist(x,\partial W)>\epsilon\},
	\]
	for $\epsilon>0$. Recall also from \eqref{eq:Hcond} that there is a constant $\gamma_W>0$ such that
	\begin{equation}\label{iP11Wcond}
		|W \setminus W_\epsilon| \leq \gamma_W \epsilon.
	\end{equation}
	We can now rewrite \eqref{iP11eq1} as 
	\begin{equation}\label{iP11eq2}
		t^3 \iiint_{(\R^d)^3} h_i(x,y)h_j(y,w)\, dxdydw = t^3 \int_{(W_i \cap W_j)_{\epsilon_t}}dy \left(\int_{B(y,\epsilon_t)}dx\, |x-y|^\alpha\right)^2 + R_t,
	\end{equation}
	where $R_t$ is given by
	\[
	R_t := t^3 \int_{(W_i \cap W_j)\setminus (W_i \cap W_j)_{\epsilon_t}}dy\int_{W_i}dx\int_{W_j}dw \ind{x \in B(y,\epsilon_t)} \ind{w \in B(y,\epsilon_t)} |x-y|^{\alpha} |w-y|^\alpha.
	\]
	The first term on the RHS of \eqref{iP11eq2} is given by
	\begin{equation}\label{iP11eq3}
		t^3\int_{(W_i \cap W_j)_{\epsilon_t}}dy \left(\int_{B(y,\epsilon_t)}dx\, |x-y|^\alpha\right)^2 = \left|(W_i \cap W_j)_{\epsilon_t}\right| \left(\frac{d\kappa_d}{d+\alpha}\right)^2 t^3\epsilon_t^{2d+2\alpha}.
	\end{equation}
	Using \eqref{iP11Wcond}, one sees that
	\begin{multline}\label{iP11eq4}
		0 \leq R_t \leq t^3\left|(W_i \cap W_j)\setminus (W_i \cap W_j)_{\epsilon_t}\right|\left(\int_{B(y,\epsilon_t)}dx\, |x-y|^\alpha\right)^2 \\\leq \gamma_{W_i \cap W_j}t^3\epsilon_t^{1+2d+2\alpha} \left(\frac{d\kappa_d}{d+\alpha}\right)^2.
	\end{multline}
	Combining \eqref{iP11eq2}, \eqref{iP11eq3} and \eqref{iP11eq4} with \eqref{iP11Wcond}, one sees that
	\begin{multline}\label{iP11eq5}
		\left|t^3\iiint_{(\R^d)^3} h_i(x,y)h_j(y,w) \, dxdydw - \left|W_i \cap W_j\right| \left(\frac{d\kappa_d}{d+\alpha}\right)^2 t^3\epsilon_t^{2d+2\alpha}\right| \\\leq \gamma_{W_i \cap W_j}t^3\epsilon_t^{1+2d+2\alpha} \left(\frac{d\kappa_d}{d+\alpha}\right)^2.
	\end{multline}
	For the second term in \eqref{iP11cov}, we proceed similarly. We have
	\begin{align}
		\tfrac{1}{2}t^2\iint_{(\R^d)^2} h_i(x,y)h_j(x,y)\, dxdy &= \tfrac{1}{2}t^2\iint_{W_i\cap W_j} \ind{|x-y|<\epsilon_t} |x-y|^{2\alpha} dxdy\\
		&=\tfrac{1}{2}t^2\int_{(W_i \cap W_j)_{\epsilon_t}}dy\int_{B(y,\epsilon_t)}dx |x-y|^{2\alpha} + R_t',
	\end{align}
	where
	\[
	0 \leq R_t' \leq \tfrac{1}{2}t^2\gamma_{W_i \cap W_j} \frac{d\kappa_d}{d+2\alpha} \epsilon_t^{d+2\alpha+1}.
	\]
	Hence we get
	\begin{multline}\label{iP11eq6}
		\left|\tfrac{1}{2}t^2\iint_{(\R^d)^2} h_i(x,y)h_j(x,y)\, dxdy - \tfrac{1}{2}\left|W_i \cap W_j\right| \frac{d\kappa_d}{d+2\alpha}t^2\epsilon^{d+2\alpha}\right| \\\leq \tfrac{1}{2} \gamma_{W_i \cap W_j} \frac{d\kappa_d}{d+2\alpha}t^2\epsilon^{d+2\alpha+1}.
	\end{multline}
	From \eqref{iP11cov}, \eqref{iP11eq5} and \eqref{iP11eq6}, we deduce that
	\begin{equation}\label{iP11eq7}
		\left|\frac{\cov\left(F_t^{(i)},F_t^{(j)}\right)}{\tfrac{1}{2} \frac{d\kappa_d}{d+2\alpha} t^2\epsilon_t^{d+2\alpha} + \left(\frac{d\kappa_d}{d+\alpha}\right)^2 t^3 \epsilon_t^{2d+2\alpha}} - \left|W_i \cap W_j\right|\right| \leq \gamma_{W_i \cap W_j}\epsilon_t.
	\end{equation}
	Now we use \Cref{thmMulti} and provide bounds for the terms $\zeta_1^{(p)},\ldots,\zeta_4^{(q)}$. 
	For the term $\zeta_1^{(p)}$, we have by \eqref{iP11eq7} that
	\[
	\zeta_1^{(p)} \leq \beta_{W_i \cap W_j} \epsilon_t.
	\]
	Plugging in the bounds from \Cref{lem:GilBounds}, we get for $\zeta_2^{(p)}$ that
	\begin{multline}
		\zeta_2^{(p)} \leq 2^{2/p-1}c t^{-2} \epsilon_t^{-2\alpha-d} \left(1 \vee t\epsilon_t^d\right)^{-1} \\\sum_{i,j=1}^{m} \left(\int_{W_i\cap W_j} \left(\int_{W_i \cap W_j} \ind{|x-y|<\epsilon_t} |x-y|^{2\alpha} tdx \right)^p tdy\right)^{1/p}.
	\end{multline}
	Simplifying, we deduce
	\[
	\zeta_2^{(p)} \leq c t^{-1+1/p} (1 \vee t\epsilon_t^d)^{-1} \sigma_1 \sum_{i,j=1}^{m} |W_i \cap W_j|^{1/p}.
	\]
	We proceed in the same way for $\zeta_3^{(p)}$ and $\zeta_4^{(q)}$, deducing
	\[
	\zeta_3^{(p)} \leq 2^{2/p}c m t^{-1+1/p} (1 \wedge t\epsilon_t^d)^{1/(2p)} \frac{d\kappa_d}{d+\alpha}\sum_{i=1}^{m} |W_i|^{1/p}
	\]
	and
	\[
	\zeta_4^{(q)} \leq m^{q-1}c t^{(1-q)/2} (1 \wedge t\epsilon_t^d)^{(1-q)/2} \sum_{i,j=1}^{m} |W_i \cap W_j|.
	\]
	If we take, as in the proof of \Cref{thmGilMulti1}, $q=3-\frac{2}{p}$, we get
	\[
	\zeta_2^{(p)} + \zeta_3^{(p)} + \zeta_4^{(q)} \leq c \left(t^{-1+1/p} \vee (t^2\epsilon_t^d)^{-1+1/p}\right),
	\]
	which concludes the proof.
\end{proof}

\appendix
\section{Some results on Poisson functionals}\label{secApp}
In this section, we collect some necessary results on Poisson functionals and Malliavin calculus. Let $\eta$ be a Poisson measure of intensity $\lambda$ on a $\sigma$-finite measure space $(\X,\lambda)$. We denote by $L^p(\p_\eta)$ the set of measurable functionals $F$ of $\eta$ such that $\E |F|^p < \infty$.

\textbf{Add-one cost.} Let $F$ be a measurable Poisson functional. For $x \in \X$, we define the add-one cost operator $\mal$ by
\[
\mal_x F := F(\eta + \delta_x) - F(\eta),
\]
fo $x \in \X$. For $n \geq 1$, we set inductively $\mal^{(n)} := \mal \mal^{(n-1)}$, where $\mal^{(0)}$ is the identity operator and $\mal^{(1)}=\mal$. We say that $F \in \dom \mal$ if $F \in L^2(\p_\eta)$ and
\[
\int_\X \E (\mal_x F)^2 \lambda(dx) < \infty.
\]
Note that the following product formula holds for $F,G$ measurable functionals of $\eta$:
\begin{equation}\label{eq:ProdForm}
\mal_x (FG) = \mal_xF \cdot G + F \cdot \mal_x G + \mal_xF \cdot \mal_xG.
\end{equation}

\textbf{Chaotic decomposition.} Let $n \in \N$ and $h\in L^2(\X^n,\lambda^{(n)})$. Denote by $I_n(h)$ the $n^{\text{th}}$ Wiener-Itô integral (see \cite[equation 25, p. 8]{LastPoiss}). Then for any $F \in L^2(\p_\eta)$, it holds that
\begin{equation} \label{eq:Chaos}
F = \sum_{n=0}^{\infty} I(f_n),
\end{equation}
where $f_n(x_1,\ldots,x_n) = \frac{1}{n!} \E \mal_{x_1,\ldots,x_n}^{(n)} F$ and $f_0 = I(f_0) = \E F$ and the series converges in $L^2(\p_\eta)$ (see \cite[Thm.~2]{LastPoiss}).

\textbf{Mecke formula.} Let $h=h(x,\eta)$ be a measurable, non-negative function of $\eta$ and $x \in \X$ such that 
\[
\E \int_\X |h(x,\eta)| \lambda(dx) <\infty.
\]

Then it holds that
\begin{equation} \label{eq:Mecke}
	\E \int_\X h(x,\eta) \eta(dx) = \E \int_\X h(x,\eta + \delta_x) \lambda(dx).
\end{equation}

\textbf{The operator $\opL$.} For functionals $F \in L^2(\p_\eta)$ having expansion \eqref{eq:Chaos}, we define the (pseudo) inverse of the Ornstein-Uhlenbeck generator $\opL$ by
\begin{equation}\label{eq:DefopL}
\opL F := - \sum_{n=1}^{\infty} \frac{1}{n} I_n(f_n).
\end{equation}
See also \cite[p.~24]{LastPoiss} for further details. We also use the following properties of the operator $\opL$, which can be found in \cite[Lemma~3.4 \& proof of Prop.~4.1]{LastPoiss}.
\begin{lem}\label{lem:opL}
	For any $F \in L^2(\p_\eta)$ and $r \geq 1$, it holds that
	\begin{align*}
		& \E |\mal_x\opL F |^r \leq \E |\mal_x F|^r, \quad \text{for } \lambda-\text{a.e. } x \in \X,\\
		\intertext{and}
		&\E |\DD_{x,y}\opL F |^r \leq \E |\DD_{x,y} F|^r, \quad \text{for } \lambda^{(2)}-\text{a.e. } (x,y) \in \X^2.
	\end{align*}
	Moreover, for $F,G \in \dom \mal$ with $\E F = \E G =0$, we have
	\[
	\cov(F,G) = \E \int_\X (\mal_x F) \cdot (-\mal_x \opL F) \lambda(dx).
	\]
\end{lem}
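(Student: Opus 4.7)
All three claims follow from the chaotic representation of $F$ combined with the action of $\mal$ and $\opL$ on individual multiple Wiener-It\^o integrals. I prove the two contraction inequalities via a Mehler-type representation of $-\opL$ based on independent Poisson thinning, and derive the covariance identity by a bilinear computation on the chaos. All ingredients are present in the cited lecture notes; the plan is to reorganise them into a self-contained argument.

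\textbf{Mehler representation.} On an enlarged probability space, introduce an independent copy $\eta'$ of $\eta$. For $u \in [0,1]$, let $\eta_u$ be the $u$-thinning of $\eta$ (each atom retained independently with probability $u$) and $\eta'_{1-u}$ the $(1-u)$-thinning of $\eta'$; then $\tilde\eta_u := \eta_u + \eta'_{1-u}$ is a Poisson measure of intensity $\lambda$ with $\tilde\eta_u \stackrel{d}{=} \eta$ for every $u$. Setting $T_u F := \E[F(\tilde\eta_u) \mid \eta]$, the identity $\E[I_n(f_n)(\tilde\eta_u) \mid \eta] = u^n I_n(f_n)(\eta)$ (obtained by expanding $I_n$ on the superposition and noting that cross-terms involving $\eta'$ have mean zero) combined with $\int_0^1 u^{n-1}\,du = 1/n$ yields, for centered $F$,
\[
-\opL F(\eta) = \int_0^1 \frac{1}{u}\, T_u F(\eta)\, du,
\]
with integrability at $u=0$ ensured by $T_0 F = \E F = 0$ and a first-order Taylor expansion.

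\textbf{Contractions via Jensen.} Adding $\delta_x$ to $\eta$ and then performing the $u$-thinning keeps the new atom with probability $u$, which produces the commutation $\mal_x T_u F(\eta) = u\, \E[\mal_x F(\tilde\eta_u) \mid \eta]$. The factor $u$ cancels the $1/u$ in the Mehler integral, so
\[
-\mal_x \opL F(\eta) = \int_0^1 \E\bigl[\mal_x F(\tilde\eta_u) \mid \eta\bigr]\, du.
\]
Applying Jensen's inequality for $|\cdot|^r$ first to the probability measure $du$ on $[0,1]$, then to the conditional expectation, and finally taking expectation while using $\tilde\eta_u \stackrel{d}{=} \eta$, delivers $\E|\mal_x \opL F|^r \leq \E|\mal_x F|^r$. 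Iterating the same commutation once more produces
\[
-\DD_{x,y} \opL F(\eta) = \int_0^1 u\, \E\bigl[\DD_{x,y} F(\tilde\eta_u) \mid \eta\bigr]\, du,
\]
after which the two Jensen steps combined with $\int_0^1 u^r\,du \leq 1$ give $\E|\DD_{x,y} \opL F|^r \leq \E|\DD_{x,y} F|^r$.

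\textbf{Covariance identity and main obstacle.} Write $F = \sum_{n\geq 1} I_n(f_n)$, $G = \sum_{n\geq 1} I_n(g_n)$ (the $n=0$ terms vanish by centering) and use $\mal_x I_n(f_n) = n I_{n-1}(f_n(x,\cdot))$ to get $\mal_x F = \sum_{n\geq 1} n I_{n-1}(f_n(x,\cdot))$ and $-\mal_x \opL G = \sum_{n\geq 1} I_{n-1}(g_n(x,\cdot))$. The isometry of Wiener-It\^o integrals of equal order together with the orthogonality of different orders collapses $\E[(\mal_x F)(-\mal_x \opL G)]$ to $\sum_{n\geq 1} n\cdot(n-1)! \langle f_n(x,\cdot), g_n(x,\cdot)\rangle_{L^2(\lambda^{n-1})}$; integrating in $x$ and applying Fubini delivers $\sum_{n\geq 1} n!\,\langle f_n, g_n\rangle_{L^2(\lambda^n)} = \cov(F,G)$, as desired. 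The main obstacle throughout is the rigorous justification of the Mehler representation and of the interchange of $\mal_x$ with $\int_0^1 du/u$; I would handle this by first verifying every identity on finite chaos sums, where every step is a finite-dimensional computation, and then extending to general $F \in L^2(\p_\eta)$ by $L^2$-density, using the standard isometry $\int_\X \E(\mal_x F)^2 \lambda(dx) = \sum_{n\geq 1} n \cdot n!\, \norm{f_n}_{L^2(\lambda^n)}^2$ and its $\DD$-analogue to control convergence.
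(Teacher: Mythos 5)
Your proposal is correct and follows essentially the same route as the source the paper cites for this lemma (\cite[Lemma~3.4 and proof of Prop.~4.1]{LastPoiss}): the Mehler/thinning representation of $-\opL$ together with the commutation $\mal_x T_u = u\,T_u\mal_x$ and Jensen's inequality yields the two contraction bounds, and the chaos-expansion isometry yields the covariance identity. Note only that the displayed covariance formula in the statement contains a typo ($-\mal_x\opL F$ should be $-\mal_x\opL G$), and your argument proves the intended $G$-version.
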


The improvement to the moment conditions given by the bounds of Theorem~\ref{thmMulti} comes from the following inequality, the so-called \textbf{$p$-Poincaré inequality}.

\begin{prop}[{\cite[(4.7) in Remark~4.4]{TT23}}]\label{prop:ppoin}
	Let $F \in L^1(\p_\eta)$ and $p \in [1,2]$. Then
	\begin{equation}\label{eq:ppoin}
	\E |F|^p - |\E F|^p \leq 2^{2-p} \int_\X \E |\mal_x F|^p \lambda(dx).
	\end{equation}
\end{prop}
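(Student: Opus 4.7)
The bound is an interpolation between the classical $L^2$ Poincaré inequality on Poisson space (case $p=2$, constant $1$) and an $L^1$-deviation bound (case $p=1$, constant $2$). I would prove it via the Ornstein--Uhlenbeck semigroup on Poisson space, together with an elementary pointwise inequality.

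\textbf{Setup.} Let $(P_t)_{t \geq 0}$ denote the Ornstein--Uhlenbeck semigroup acting on chaoses as $P_t I_n(f_n) = e^{-nt} I_n(f_n)$; its generator $\mathcal{L}$ is related to the operator $\opL$ of the appendix via $\opL \mathcal{L} F = F - \E F$, and satisfies the integration-by-parts identity
\[
\E[G \cdot \mathcal{L} H] = -\int_\X \E[\mal_x G \cdot \mal_x H]\,\lambda(dx),
\]
which one can verify directly on chaoses using $\mal_x I_n(f_n) = n I_{n-1}(f_n(x,\cdot))$. The Mehler representation $P_t F(\eta) = \E[F(\Pi_t\eta + \eta''_t)\mid \eta]$ (with $\Pi_t\eta$ an $e^{-t}$-thinning of $\eta$ and $\eta''_t$ an independent Poisson measure of intensity $(1-e^{-t})\lambda$) yields both the commutation relation $\mal_x P_t F = e^{-t} P_t \mal_x F$ and the ergodic convergence $P_t F \to \E F$ in $L^p$ as $t \to \infty$.

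\textbf{Main computation.} Set $\phi(u) := |u|^{p-1} \sgn(u)$ so that $\tfrac{d}{du}|u|^p = p\,\phi(u)$. Differentiating under the expectation and applying the integration-by-parts identity to $G = \phi(P_t F)$ and $H = P_t F$ gives
\[
-\frac{d}{dt} \E|P_t F|^p \;=\; p \int_\X \E\bigl[\bigl(\phi(P_t F + \mal_x P_t F) - \phi(P_t F)\bigr)\, \mal_x P_t F\bigr]\,\lambda(dx).
\]
The elementary pointwise bound
\[
|\phi(a) - \phi(b)| \;\leq\; 2^{2-p}\,|a-b|^{p-1} \qquad (a,b \in \R,\ p \in [1,2])
\]
then gives the key estimate. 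To prove it I would split into two cases: when $a,b$ have the same sign it reduces to $|a^{p-1}-b^{p-1}|\leq |a-b|^{p-1}$ on $[0,\infty)$, which follows from subadditivity of $x^{p-1}$ (since $p-1\in[0,1]$); when they have opposite signs it reduces to $a^{p-1}+|b|^{p-1}\leq 2^{2-p}(a+|b|)^{p-1}$, which is Jensen's inequality for the concave function $x\mapsto x^{p-1}$. Combined with the commutation relation and Jensen for $P_t$ (a randomized conditional expectation), this yields
\[
-\frac{d}{dt}\E|P_t F|^p \;\leq\; p\cdot 2^{2-p} \int_\X \E|\mal_x P_t F|^p\,\lambda(dx) \;\leq\; p\cdot 2^{2-p}\, e^{-pt} \int_\X \E|\mal_x F|^p\,\lambda(dx).
\]
Integrating over $t \in [0,\infty)$, using $\int_0^\infty pe^{-pt}dt = 1$ and the ergodicity $\lim_{t\to\infty}\E|P_tF|^p = |\E F|^p$, delivers the claimed inequality with the sharp constant $2^{2-p}$.

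\textbf{Main obstacle.} The principal technical point is the integration-by-parts step with the non-smooth function $\phi(u) = |u|^{p-1}\sgn(u)$, whose classical derivative blows up at $0$ for $p<2$. I would handle this by approximating $\phi$ uniformly on compacts by smooth truncations $\phi_\varepsilon$ (e.g.\ $\phi_\varepsilon(u) = u(\varepsilon+u^2)^{(p-2)/2}$), applying the identity to $\phi_\varepsilon$, and passing to the limit via dominated convergence using the uniform pointwise bound $|\phi_\varepsilon(a)-\phi_\varepsilon(b)|\leq 2^{2-p}|a-b|^{p-1}$ (up to vanishing corrections). A secondary issue is reducing from the stated hypothesis $F \in L^1(\p_\eta)$ to the setting $F \in L^2(\p_\eta)$ in which the chaos expansion and semigroup are initially defined: this is handled by truncating $F$ at levels $\pm M$ (which only decreases $|\mal_x F|$ pointwise) and applying Fatou's lemma on both sides as $M \to \infty$.
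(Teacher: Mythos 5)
Your argument is correct in substance, but note that it cannot be compared with a proof in this paper: \Cref{prop:ppoin} is not proved here at all, it is imported verbatim from \cite{TT23}, so what you have written is a self-contained alternative derivation by semigroup interpolation. The skeleton is sound: the Mehler representation yields both the commutation $\mal_x P_t F = e^{-t} P_t \mal_x F$ and, via Jensen and invariance, the contractivity $\E|\mal_x P_t F|^p \leq e^{-pt}\,\E|\mal_x F|^p$; integration by parts with the generator turns $-\tfrac{d}{dt}\E|P_tF|^p$ into $p\int_\X \E\bigl[(\phi(P_tF+\mal_xP_tF)-\phi(P_tF))\,\mal_xP_tF\bigr]\lambda(dx)$ with a nonnegative integrand; and your two-case proof of $|\phi(a)-\phi(b)|\leq 2^{2-p}|a-b|^{p-1}$ (subadditivity of $x\mapsto x^{p-1}$ for equal signs, concavity and the power-mean step for opposite signs) is exactly what produces the sharp constant after integrating $pe^{-pt}$ over $t\in(0,\infty)$. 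The reduction from $F\in L^1(\p_\eta)$ to bounded functionals is also fine, since $u\mapsto (u\wedge M)\vee(-M)$ is $1$-Lipschitz, hence $|\mal_x F_M|\leq |\mal_x F|$, and monotone/dominated convergence passes both sides to the limit. Two details should be tightened in a full write-up: first, a bounded functional need not belong to $\dom\mal$, so the integration by parts should be applied to $P_tF$ for $t>0$ (which lies in the domain of the generator and satisfies $\int_\X\E(\mal_xP_tF)^2\lambda(dx)<\infty$ by the smoothing of the semigroup), integrating the differential inequality over $[s,T]$ and letting $s\downarrow 0$, $T\uparrow\infty$; second, the regularisation $\phi_\varepsilon(u)=u(\varepsilon+u^2)^{(p-2)/2}$ is not obviously subject to the increment bound with the constant $2^{2-p}$ uniformly in $\varepsilon$ — a $p$-dependent uniform constant (or a vanishing correction), which is what one can actually check, is enough for domination, with the sharp constant recovered only after letting $\varepsilon\to 0$, and the proof should say so explicitly. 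With these adjustments your route gives a complete, self-contained proof, at the cost of setting up the Mehler semigroup machinery that the paper avoids by simply citing \cite{TT23}.
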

When $p=2$, this inequality reduces to the classical Poincaré inequality, see e.g. \cite[Thm.~10]{LastPoiss}.

\bibliographystyle{alpha}
\bibliography{bibliography.bib}
\end{document}